\documentclass[ijoc,sglanonrev,letterpaper,onecolumn]{informs4}

\OneAndAHalfSpacedXII
\usepackage[letterpaper,margin=1in]{geometry}
\usepackage{natbib}
\bibpunct[, ]{(}{)}{,}{a}{}{,}%
\def\bibfont{\fontsize{9}{11.3}\selectfont}%
\usepackage{fix-cm}
\usepackage{anyfontsize}

\usepackage{makecell, float, array, booktabs, amsmath, amssymb, mathtools, graphicx}
\usepackage{xcolor}

\usepackage[linesnumbered,ruled,vlined]{algorithm2e}
\usepackage[T1]{fontenc}
\usepackage{longtable}
\usepackage{hyperref}
\hypersetup{
    colorlinks,
    linkcolor={red!50!red},
    citecolor={blue!50!blue},
    urlcolor={blue!80!blue},
    pdfinfo={
        Title={},
        Author={},
        Subject={},
        Keywords={},
        Creator={},
        Producer={}
    }
}
\usepackage{enumitem}
\setlist[itemize]{label=\textbullet}
\graphicspath{{./figures/}}
\renewenvironment{proof}[1][Proof]{\par\textit{#1.}\enspace\ignorespaces}{\par}

\TheoremsNumberedThrough
\EquationsNumberedThrough

\begin{document}

\RUNAUTHOR{You et al.}
\RUNTITLE{L-PDDOIs for Stabilizing Column Generation}

\TITLE{Learned Pairwise Deep Dual-Optimal Inequalities for Stabilizing Column Generation}

\ARTICLEAUTHORS{%
    \AUTHOR{%
        Zhengzhong Ricky You\textsuperscript{1},
        Bo Tang\textsuperscript{2,*},
        Haoran Liu\textsuperscript{3},
        Baichuan Mo\textsuperscript{1}
    }%
    \AFF{%
        \textsuperscript{1}Department of Civil Engineering, Tsinghua University, Beijing 100084, China, \EMAIL{ricky.you.or@gmail.com, bmo@tsinghua.edu.cn}\\
        \textsuperscript{2}MIT Sloan School of Management, Massachusetts Institute of Technology, 100 Main Street, Cambridge, MA 02142, USA, \EMAIL{botang@mit.edu}\\
        \textsuperscript{3}Department of Industrial and Systems Engineering, University of Florida, Gainesville, FL 32611, USA, \EMAIL{haoran.liu@ufl.edu}\\
        \textsuperscript{*}Corresponding author.
    }%
}

\ABSTRACT{
Column generation (CG) is central to many large-scale optimization algorithms, including branch-price-and-cut methods for vehicle routing problems, but unstable dual solutions can substantially slow its convergence. Existing deep dual-optimal inequalities can reduce this instability by restricting the dual space. Their construction, however, typically relies on problem-specific exchange arguments that are difficult to establish for routing problems with capacity limits, time windows, and other resource constraints.
We introduce learned pairwise deep dual-optimal inequalities (L-PDDOIs), a learning framework that predicts pairwise orderings between dual variables and incorporates their primal counterparts directly into the master problem.
To construct training labels, the framework samples optimal dual solutions and selects pairwise order relations that hold simultaneously on a sufficiently large common subset of the samples. A classifier then assigns a score to each candidate relation. Because conflicts and redundancies among the predicted relations can impair performance, graph-based postprocessing filters and compresses the candidate set before deployment. We further introduce a recovery procedure that selectively relaxes learned inequalities and provides a certificate when the baseline CG bound has been restored.
On the main test sets for the capacitated vehicle routing problem and the vehicle routing problem with time windows, direct deployment of L-PDDOIs reduces the geometric mean root CG time by \(89.7\%\) and \(93.9\%\), respectively, while incurring mean bound losses of only \(1.3\%\) and \(0.5\%\). The recovery procedure retains corresponding time reductions of \(54.8\%\) and \(83.1\%\), respectively, while guaranteeing no loss in the CG bound.
}

\KEYWORDS{
    column generation; deep dual-optimal inequalities; dual stabilization; machine learning; vehicle routing.
}

\maketitle

\setlength{\abovedisplayskip}{6pt plus 2pt minus 2pt}
\setlength{\belowdisplayskip}{6pt plus 2pt minus 2pt}
\setlength{\abovedisplayshortskip}{4pt plus 1pt minus 1pt}
\setlength{\belowdisplayshortskip}{4pt plus 1pt minus 1pt}
\setlength{\jot}{1.5pt}

\section{Introduction}\label{sec_intro}

Exact solution methods for vehicle routing problems (VRPs) often rely on branch-price-and-cut (BPC) frameworks, which have been applied across a broad range of VRP classes \citep{costa2019exact}. Their effectiveness in routing derives largely from the strong linear programming (LP) bounds provided by the underlying set-partitioning formulation. Because this formulation contains an exponential number of route variables, its LP relaxation is typically solved by column generation (CG), which alternates between solving a restricted master problem (RMP) and pricing new routes until no route with negative reduced cost remains. CG often dominates the computation in BPC because each pricing call typically requires solving an elementary shortest path problem with resource constraints, which is NP-hard \citep{feillet2010tutorial,spliet2023technical}. Even when elementarity is relaxed, as in the \(ng\)-route relaxation, the resulting pricing problem remains computationally demanding \citep{baldacci2011new,costa2019exact}. Improving exact routing algorithms therefore often depends on accelerating CG convergence. A central obstacle is dual instability. Successive RMP solutions may exhibit large oscillations in their dual multipliers, leading to tailing off and slow convergence \citep{lubbecke2005selected,rousseau2007interior,gschwind2016dual}. Similar difficulties have been documented in cutting stock \citep{ben2006dual}, bin packing \citep{gschwind2016dual}, and airline crew scheduling \citep{borndorfer2006column}.

Routing set-partitioning RMPs are often highly primal-degenerate because a basic solution contains as many basic variables as there are linearly independent partitioning constraints, whereas only a small number of route variables are typically positive. Consequently, many basic route variables may take value zero, and multiple degenerate bases may represent the same primal solution. Because simplex dual multipliers depend on the selected basis, different degenerate bases associated with the same or nearly identical primal solution can yield substantially different dual multiplier vectors. Moreover, when only a small number of route constraints are active at dual optimality, the optimal dual face may retain considerable freedom. Small changes in the RMP or its basis can therefore induce large variations in the dual multipliers, resulting in dual oscillation and unstable pricing behavior in CG.

Dual-optimal inequalities (DOIs) and deep dual-optimal inequalities (DDOIs) impose carefully chosen inequalities on the dual space that preserve the relevant dual optimum; by contracting the dual search region, they can accelerate CG without degrading the target dual bound \citep{ben2006dual,gschwind2016dual,haghani2022smooth,guo2025nested}. However, deriving DDOIs that are both sufficiently strong and provably compatible with an optimal dual solution remains difficult in routing. Existing constructions typically rely on exchange arguments specific to the problem, including pattern replacement arguments in cutting stock and packing models, overcoverage and undercoverage trades used by smooth and flexible DOIs, and inexpensive item swaps or detour operations along active routes \citep{ben2006dual,gschwind2016dual,haghani2022smooth,yarkony2021detour}. Such arguments require a valid and cheaply computable compensation rule, i.e., after exchanging an item or relaxing coverage, one must still obtain a feasible column or an explicitly bounded surrogate cost. The compensation requirement is difficult to satisfy in general routing, where a seemingly local customer exchange can violate global route feasibility through capacity, time windows, precedence, or synchronization requirements. Certifying the compensating penalty may itself require solving another pricing problem with resource constraints; otherwise, the compensating bound must be conservative, preserving validity but potentially making the resulting inequality ineffective. Such structural barriers leave limited room for broadly deployable DDOIs in routing and motivate methods that identify candidate dual inequalities without manually derived exchange rules for each VRP variant.

We address this challenge with learned pairwise deep dual-optimal inequalities (L-PDDOIs), a learning framework for constructing simple pairwise dual inequalities of the form \(p_i \le p_j\), where \(p_i\) and \(p_j\) are set-partitioning dual variables associated with customers. The learning component formulates pair ranking as a supervised binary classification task over ordered customer pairs \((i,j)\). For each pair, the model uses routing features \(\mathbf f_{ij}\) to produce a score \(s_{ij}\in[0,1]\), where a larger score indicates stronger model support for including \(p_i\le p_j\) in the candidate set. The training labels identify pairwise inequalities that hold consistently across sampled optimal dual solutions.

The learner scores each ordered pair independently, which can produce structural inconsistencies. In particular, independent scoring does not guarantee transitivity. Although the predicted inequalities \(p_i \le p_j\) and \(p_j \le p_k\) jointly imply \(p_i \le p_k\), the model may reject the latter inequality. Moreover, the simultaneous predictions \(p_i \le p_j\), \(p_j \le p_k\), and \(p_k \le p_i\) force \(p_i = p_j = p_k\), forming what we later define as a cycle, even though equalities are excluded when constructing the labels. To address these issues, graph repair removes predictions as needed to obtain an acyclic and inference-closed subgraph. L-PDDOIs therefore combine learned pairwise predictions with deployment decisions that account for the structure of the optimization model. Although graph repair controls the joint consistency of the predicted inequality system, it does not guarantee preservation of the baseline CG bound. The distinction gives rise to two deployment regimes. Direct deployment prioritizes computational speed at the possible expense of bound strength. After exact pricing, the restricted dual still yields a valid lower bound, although it may be weaker than the baseline. Recovery is introduced when the original bound, or the corresponding solution of the original master problem, must be restored before branching. It releases candidate inequalities until none remain active, at which point full bound recovery is achieved.

\paragraph{Main Contributions.}
This paper makes the following three contributions to learning and deploying structural dual inequalities in CG.
\begin{itemize}
    \item \textit{We introduce L-PDDOIs as a learnable family of structural dual inequalities for CG.} L-PDDOIs predict pairwise order relations \(p_i\le p_j\) between customer dual variables and incorporate their primal counterparts into the RMP through artificial columns. We define deep dual optimality jointly for the full inequality system and develop consistency-aware probabilistic set construction (CAPSC), which samples the optimal dual face and constructs jointly compatible labels for training a supervised pair classifier.

    \item \textit{We develop repair, compression, and recovery mechanisms for deploying learned inequalities with structural and bound guarantees.} We formulate an integer program that retains a large subset of predicted relations while enforcing acyclicity and inference closure, and then apply transitive reduction to remove redundant inequalities without changing the induced dual feasible region. When the original CG bound must be recovered, inequalities associated with active artificial variables are iteratively released using progressively stronger pricing stages.

    \item \textit{We provide extensive computational evidence on the capacitated vehicle routing problem (CVRP) and vehicle routing problem with time windows (VRPTW).} Direct L-PDDOI deployment reduces the geometric mean root CG time by \(89.7\%\) for the CVRP and \(93.9\%\) for the VRPTW, with mean bound losses of only \(1.3\%\) and \(0.5\%\), respectively. Recovery restores the baseline bounds while retaining time reductions of \(54.8\%\) and \(83.1\%\), substantially exceeding those achieved by automatic dual price smoothing. Graph repair reduces the mean bound loss from \(9.5\%\) to \(1.3\%\) on the CVRP and from \(15.0\%\) to \(0.5\%\) on the VRPTW, while transitive reduction preserves these bounds and substantially reduces the number of deployed inequalities. Transfer experiments on CVRPLIB and Gehring--Homberger benchmarks show that the runtime gains persist on unseen instance families and larger problems.
\end{itemize}

The remainder of the paper is organized as follows. Section~\ref{sec_review} reviews stabilization methods for routing problems, manually derived DDOIs, and learning approaches to dual stabilization. Section~\ref{sec_setting} introduces the set-partitioning master problem, its dual formulation, and the interpretation of CG in dual space. Section~\ref{sec_pair_layer} presents the L-PDDOI framework, including label construction, pair prediction, graph repair, transitive reduction, and recovery.
Section~\ref{sec_exp_design} evaluates the performance of L-PDDOIs through case studies on the CVRP and VRPTW.
Section~\ref{sec_conclusion} concludes the paper. The \hyperref[sec:ec:start]{electronic companion (EC)} provides the proofs, feature construction and instance generation details, and additional computational results.

\section{Related Literature on Stabilization}\label{sec_review}

We review three streams of literature most closely related to our work. We first summarize the development of dual stabilization methods for CG. We then discuss DOIs and DDOIs, which provide the structural foundation for L-PDDOIs. Finally, we review machine learning approaches to dual stabilization and clarify how our work complements these approaches.

\subsection{Dual Stabilization Methods}\label{subsec_review_routing}

Dual stabilization is a classical response to the slow convergence of CG caused by degeneracy and oscillatory dual solutions. Early methods address this issue by explicitly restricting dual movement. The boxstep method confines the dual vector to a neighborhood of a stability center, thereby preventing consecutive master iterations from moving too far from a trusted reference point \citep{marsten1975boxstep}. Although this box constraint directly suppresses dual oscillation, it may be overly restrictive. Later stabilized CG methods therefore replace fixed dual bounds with softer regularization mechanisms. Penalty, proximal, and trust-region terms guide the dual search toward a stability center while allowing the stabilization region to evolve as the algorithm progresses \citep{ben2009choice,briant2008comparison}. These softer approaches offer greater flexibility but depend on the choice of penalty parameters, update rules, and the quality of the reference point.

Another approach selects more informative dual solutions for pricing while controlling their movement. Weighted smoothing and in-out separation combine current and reference vectors to avoid unstable prices \citep{wentges1997weighted}. Building on dual price smoothing, \citet{pessoa2018automation} connect it with in-out separation and combine smoothing with penalty stabilization through self-adjusting parameters. Their method reduces the need to tune parameters for each instance and makes stabilization easier to use in BPC implementations. In parallel, interior point and analytic center approaches favor well-centered dual solutions over extreme dual optima and have been studied in CG, BPC, and vehicle routing settings \citep{rousseau2007interior,munari2013using,gondzio2016large,karimi2015analytic}. A distinct structural approach adds valid inequalities that restrict the dual space. Extra dual cuts and DOIs reduce the dual search region without changing the CG bound \citep{valerio2005using,ben2006dual}, while deeper or smoother variants impose stronger inequalities by preserving at least one optimal dual solution or by allowing controlled coverage exchanges \citep{gschwind2016dual,haghani2022smooth}. Because this structural perspective is most closely related to L-PDDOIs, the next subsection reviews DOI and DDOI constructions in greater detail.

\subsection{Dual-Optimal and Deep Dual-Optimal Inequalities}\label{subsec_dDOIs_review}

\citet{ben2006dual} distinguish DOIs, which preserve all optimal dual solutions, from DDOIs, which need only preserve at least one. The latter permit stronger contraction of the dual region but require a problem-specific argument establishing preservation of an optimal dual point. \citet{gschwind2016dual} extend the idea to bin packing, cutting stock, vector packing, vertex coloring, and bin packing with conflicts, and further discuss dynamic separation and recovery mechanisms for using strong inequalities when their validity or usefulness must be controlled adaptively. Subsequent work develops compensation rules for different master structures. Flexible DOIs give rebates for overcoverage in set packing models \citep{lokhande2020accelerating}; detour DOIs derive dual bounds from swap or detour arguments in routing and location models \citep{yarkony2021detour}; and smooth DOIs allow controlled undercoverage in exchange for over-inclusion in set covering models, including the CVRP \citep{haghani2022smooth}. More recent work studies nested set covering and set packing structures and shows that stabilization based on DOIs and DDOIs should also be analyzed together with primal degeneracy \citep{guo2025nested}.

\subsection{Machine Learning Approaches to Dual Stabilization}\label{subsec_ml_review}

A growing literature uses machine learning to provide dual information for CG, although the predicted objects and their algorithmic roles vary across methods. \citet{kraul2023machine} use predicted optimal dual values to configure a stabilized CG scheme, \citet{sugishita2024use} generate initial dual values to warm-start CG, and \citet{shen2024adaptive} combine predictions of optimal dual solutions with adaptive stabilization. In contrast, L-PDDOIs learn pairwise order inequalities, impose the selected relations directly in the master, and use the activity of the corresponding artificial variables after exact pricing as a release signal during recovery. The proposed construction does not require each relation to hold over the entire optimal dual face. Instead, CAPSC selects a system of relations jointly supported by a retained subset of sampled optimal points, while joint deep dual optimality requires the deployed system to preserve at least one optimal dual solution.

A separate stream uses machine learning to reduce the cost of generating, prioritizing, or admitting columns. These methods select generated columns \citep{morabit2021machine}, select arcs in pricing networks \citep{morabit2023machine}, rank pricing subproblems \citep{koutecka2025machine}, or learn policies for multiple-column selection, pricing, and heuristic selection \citep{yuan2024reinforcement,abouelrous2025reinforcement,xu2025enhancing}. They can affect the dual sequence indirectly by changing the columns seen by the RMP, but do not directly shrink the dual region to mitigate oscillation. L-PDDOIs target this complementary source of instability by learning explicit inequalities that constrain the dual trajectory determining reduced costs and driving pricing.

\section{Preliminaries}\label{sec_setting}

Before presenting the proposed method, we formulate the master problem, derive its dual, and explain why CG may oscillate in dual space. We consider the LP relaxation of the standard set-partitioning formulation used in exact vehicle routing methods based on CG \citep{costa2019exact,spliet2023technical}. The primal master problem and its dual are
\begin{center}
\small
\refstepcounter{equation}\label{formulation_mp}\edef\masterEq{\theequation}
\refstepcounter{equation}\label{formulation:MP_dual}\edef\masterDualEq{\theequation}
\[
\begin{array}{@{}c@{\quad}c@{\quad}c@{\quad}c@{\quad}c@{}}
\begin{alignedat}{2}
F:\quad \min\quad
& \sum_{r\in\Omega} c_r x_r\\
\text{s.t.}\quad
& \sum_{r\in\Omega} a_{ir}x_r = 1,
&& \quad \forall i\in[m],\\
& x_r \ge 0,
&& \quad \forall r\in\Omega,
\end{alignedat}
&
(\masterEq)
&
\mathrel{\vcenter{\hbox{$\displaystyle\xrightleftharpoons[\hspace{1.25em}\text{\footnotesize primal}\hspace{1.25em}]{\hspace{1.25em}\text{\footnotesize dual}\hspace{1.25em}}$}}}
&
\begin{alignedat}{2}
DF:\quad \max\quad
& \sum_{i\in[m]} p_i\\
\text{s.t.}\quad
& \sum_{i\in[m]} a_{ir}p_i \le c_r,
&& \quad \forall r\in\Omega,
\end{alignedat}
&
(\masterDualEq)
\end{array}
\]
\end{center}

Here, \([m]\) indexes the customer coverage constraints, \(\Omega\) denotes the set of feasible routes, \(c_r\) is the cost of route \(r\), \(a_{ir}\in\{0,1\}\) indicates whether route \(r\) visits customer \(i\), \(x_r\) is the corresponding master variable, and \(p_i\) is the unrestricted dual price associated with customer \(i\). When elementarity is relaxed, for example under ng-route relaxation \citep{baldacci2011new}, \(a_{ir}\) can be generalized to \(a_{ir}\in\mathbb{Z}_{\ge 0}\), in which case it represents the number of visits to customer \(i\) by route \(r\).

The primal and dual formulations give two complementary views of CG. The primal column view follows the implementation directly. At iteration \(k\), the RMP obtained from \eqref{formulation_mp} by replacing \(\Omega\) with \(\Omega^k\subseteq\Omega\) is solved, yielding a dual vector \(\mathbf p^k=(p_i^k)_{i\in[m]}\). Pricing then searches for a route \(r\in\Omega\) with negative reduced cost \(\bar c_r(\mathbf p)=c_r-\sum_{i\in[m]}a_{ir}p_i\). If such a route exists, it is added to the RMP; otherwise, \(\mathbf p^k\) satisfies all dual constraints in \eqref{formulation:MP_dual}, and the current RMP solution is optimal for the full master problem. The primal view describes the implementation of CG, but it leaves a natural question unanswered: why can the dual vector change substantially when each iteration adds only one or a few columns? Let \(z^\star\) denote the common optimal value of \eqref{formulation_mp} and \eqref{formulation:MP_dual}, which we call the \emph{baseline CG bound}; equality follows from strong LP duality. The bound is obtained by standard CG over the original route columns without introducing artificial columns. Let \(\mathcal D^\star\) be the optimal face of the full dual at objective value \(z^\star\), and define the current RMP dual region as
\[
\mathcal D^k:=\left\{\mathbf p\in\mathbb R^m:\sum_{i\in[m]}a_{ir}p_i\le c_r,\ \forall r\in\Omega^k\right\}.
\]

We introduce \(\mathcal D^k\) to view pricing as separation in dual space. From this perspective, CG optimizes over \(\mathcal D^k\), uses pricing to identify a violated dual constraint, and either terminates or contracts \(\mathcal D^k\) by intersecting it with the halfspace induced by the priced route. Reoptimization over these changing regions does not move the dual solution along a smooth path. After each contraction, the previous dual optimizer becomes infeasible, and a new optimizer can jump to a distant extreme point or face of the relaxed dual polyhedron. Such jumps are especially common in highly degenerate masters, where many dual solutions are optimal or nearly optimal for the current relaxation. Since pricing evaluates reduced costs using the current dual vector, abrupt changes in \(\mathbf p^k\) can substantially reshape the reduced-cost landscape and lead to oscillatory pricing behavior.

For the pair inequalities used below, let \(\mathcal I:=\{(i,j)\in[m]\times[m]:i\ne j\}\), and for \(\mathcal E\subseteq\mathcal I\) let
\[
\mathcal O(\mathcal E):=\{\mathbf p\in\mathbb R^m:p_i-p_j\le 0,\ \forall (i,j)\in\mathcal E\}
\]
denote the order cone induced by \(\mathcal E\).

\begin{definition}[Jointly deep dual-optimal]
A pair set \(\mathcal E\subseteq\mathcal I\) is \emph{jointly deep dual-optimal} if \(\mathcal D^\star\cap\mathcal O(\mathcal E)\ne\varnothing\).
\end{definition}

The property applies to the inequality set as a whole. Although each inequality in \(\mathcal E\) may individually leave \(\mathcal D^\star\) nonempty, imposing them jointly may exclude all baseline optimal dual solutions. The deployment decision must therefore account for the selected inequalities collectively. We refer to each predicted inequality as a \emph{candidate L-PDDOI}, while L-PDDOI without qualification denotes the complete learning and deployment framework. Because candidate inequalities are not guaranteed to preserve \(z^\star\), Section~\ref{subsec_bound_recovery} provides an ex post condition for certifying whether the deployed set preserves the baseline bound.

\section{Learning and Deploying Pairwise Deep Dual-Optimal Inequalities}\label{sec_pair_layer}

We begin with an overview of the complete L-PDDOI pipeline, explaining how pairwise predictions are transformed through graph-based postprocessing into a compact inequality system for deployment and how the baseline CG bound can subsequently be recovered when required. We then introduce CAPSC, which constructs jointly consistent training labels from sampled optimal dual solutions, followed by the feature representation and learning model used to score candidate pairwise inequalities. Next, we present the graph repair and transitive reduction procedures that enforce structural consistency and eliminate redundant inequalities before deployment. Finally, we describe how the resulting inequalities are incorporated into the master problem and develop the corresponding certification and bound-recovery procedure.

\subsection{Overview of the L-PDDOI Deployment Pipeline}\label{subsec_deployment_overview}

The deployment pipeline begins after the prediction model has been trained. As summarized in Figure~\ref{fig:pair_layer_pipeline}, the pipeline has three stages. First, the trained model \(h_\theta\) maps the pair features to the score matrix \(\mathbf S\), whose entry \(s_{ij}\) scores the candidate L-PDDOI \(p_i-p_j\le 0\). Second, \(\mathbf S\) defines a raw directed graph \(G^0=([m],E^0)\), where \(E^0=\left\{(i,j)\in\mathcal I:s_{ij}\ge \tau_{\mathrm{pred}}\right\}\) for the deployment cutoff \(\tau_{\mathrm{pred}}\). Independent thresholding can create a cycle, which forces unintended equalities, or a chain \(i\to j\to k\) whose implied relation \(i\to k\) was not predicted. The repair step discards arcs as needed so that the retained graph \(G^{\mathrm{rep}}=([m],E^{\mathrm{rep}})\) is acyclic and every relation implied by a retained chain was supported by the raw predictions. Third, transitive reduction removes redundant shortcut arcs while preserving the same reachability relation and order cone, yielding \(G^{\mathrm{tr}}=([m],E^{\mathrm{tr}})\). The final deployed candidate set is \(\mathcal E:=E^{\mathrm{tr}}\), and each \((i,j)\in\mathcal E\) is imposed as \(p_i-p_j\le 0\).

\begin{figure}[htbp]
    \centering
    \caption{Overview of the prediction and postprocessing pipeline for L-PDDOIs.}
    \label{fig:pair_layer_pipeline}
    \vspace{2pt}
    \includegraphics[width=\linewidth]{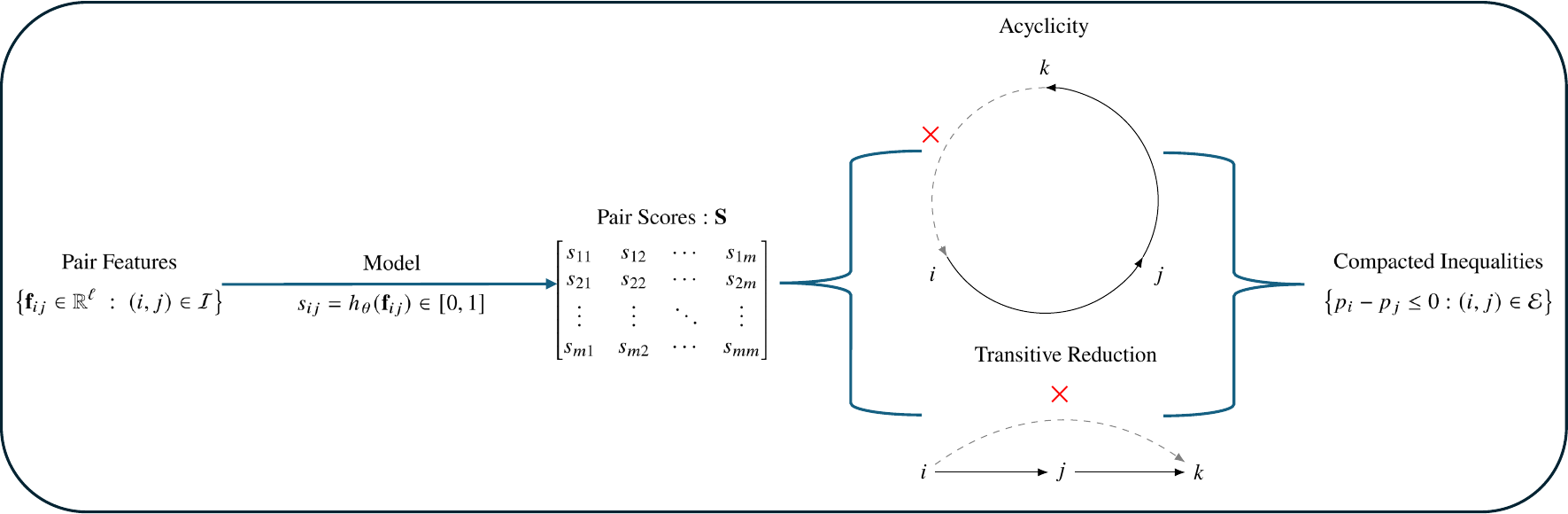}
    \vspace{-5pt}
\end{figure}

Throughout the remainder of this section, we use the following graph notation. For any arc set \(E\subseteq\mathcal I\), let \(G(E)=([m],E)\). For each \(i\in[m]\), its outgoing neighborhood is \(N_E^+(i):=\{j\in[m]\mid(i,j)\in E\}\), and its reachable set is \(R_E^+(i):=\{j\in[m]\setminus\{i\}\mid i\leadsto j\text{ in }G(E)\}\), where \(i\leadsto j\) indicates that \(G(E)\) contains a directed path from \(i\) to \(j\).
The induced reachability relation is \(\mathcal R(E):=\{(i,j)\in[m]\times[m]\mid j\in R_E^+(i)\}\). We say that \(G(E)\) satisfies \emph{inference closure} if every reachable pair is explicitly included, i.e., \(j\in R_E^+(i)\) implies \((i,j)\in E\), and that \(G(E)\) is \emph{acyclic} if it contains no directed cycle.

The repair and reduction stages address three structural issues arising from raw pair predictions. In particular, directed cycles may impose unintended equalities, missing shortcut arcs may prevent the graph representation from satisfying explicit inference closure, and redundant arcs may enlarge the RMP without further contracting the dual region. Sections~\ref{subsec_cvrp_postprocessing} and~\ref{subsec_vrptw_postprocessing} separately evaluate raw prediction, graph repair, transitive reduction, and recovery to clarify the role of each component. The results show that graph repair improves bound control, whereas transitive reduction substantially compresses the deployed system while preserving the reachability relation of the repaired graph and the corresponding system of implied inequalities.

\subsection{Consistent Label Construction for Pairwise DDOIs}\label{subsec_CAPSC}

CAPSC provides an intermediate label construction strategy between relying on a single optimal dual solution and applying a conservative bound test over the entire optimal dual face. Labels derived from a single solution may be sensitive to the particular optimal dual point returned by the solver. By contrast, the bound test computes \(\underline p_i:=\min_{\mathbf p\in\mathcal D^\star}p_i\) and \(\overline p_i:=\max_{\mathbf p\in\mathcal D^\star}p_i\) for each \(i\in[m]\), and labels \(p_i\le p_j\) as positive only if \(\overline p_i\le\underline p_j\). Although this condition ensures that the inequality holds throughout the optimal dual face, computing the required bounds involves \(2m\) auxiliary optimization problems, each requiring a complete CG solve. CAPSC instead constructs a system of pairwise inequalities that is jointly supported by a common subset of sampled optimal dual solutions, thereby reducing dependence on any single dual point without requiring exhaustive optimization over the entire optimal face.

To sample informative points on \(\mathcal D^\star\), assume that the baseline CG solve has converged to \(z^\star<\infty\), let \(\bar{\mathbf p}\in\mathcal D^\star\) be the returned optimum, and fix a box radius \(M>0\). For each \(k\in[K]\), draw \(\tilde{\mathbf d}^{(k)}\sim\mathcal N(\mathbf 0,I_m)\), redrawing until \(\tilde{\mathbf d}^{(k)}\ne\mathbf 0\), set \(\mathbf d^{(k)}=\tilde{\mathbf d}^{(k)}/\|\tilde{\mathbf d}^{(k)}\|_2\), and solve
\begin{equation}\label{eq:dual_sampler}
\mathbf p^{(k)}\in
\arg\max_{\mathbf p}\!\left\{(\mathbf d^{(k)})^\top\mathbf p:
\sum_{i\in[m]}a_{ir}p_i\le c_r\ \forall r\in\Omega,\ 
\mathbf 1^\top\mathbf p\ge z^\star,\ 
\|\mathbf p-\bar{\mathbf p}\|_\infty\le M
\right\}.
\end{equation}
The dual and level constraints restrict sampling to \(\mathcal D^\star\), and the box defines a bounded, nonempty region containing \(\bar{\mathbf p}\). Problem~\eqref{eq:dual_sampler} is solved by CG through its primal form.

Given samples \(\{\mathbf p^{(k)}\}_{k=1}^K\subset\mathcal D^\star\) and tolerance \(\varepsilon>0\), let \(\mathcal V_{ij}:=\{k\in[K]:p_i^{(k)}+\varepsilon>p_j^{(k)}\}\) contain the samples that do not support \(p_i\le p_j\) with margin \(\varepsilon\). Binary variables \(\kappa_{ij}\) and \(\eta_k\) indicate whether pair \((i,j)\) is selected and sample \(k\) is retained, respectively.

For \(\alpha\in[0,1]\) with \(\lceil\alpha K\rceil\ge1\), CAPSC computes a maximum-cardinality set of pairwise inequalities among the pair and sample selections feasible for the following formulation.
\begin{subequations}\label{eq:CAPSC}
\begin{align}
\max_{\kappa,\eta}\quad
& \sum_{(i,j)\in\mathcal I}\kappa_{ij}
\label{eq:CAPSC_obj}\\
\text{s.t.}\quad
& \sum_{k\in\mathcal V_{ij}}\eta_k
   \le |\mathcal V_{ij}|(1-\kappa_{ij}),
&& \forall (i,j)\in\mathcal I,
\label{eq:CAPSC_support}\\
& \sum_{k=1}^K\eta_k \ge \lceil\alpha K\rceil,
\label{eq:CAPSC_retention}\\
& \kappa_{ij}+\kappa_{ji}\le 1,
&& \forall\,1\le i<j\le m,
\label{eq:CAPSC_antisymmetry}\\
& \kappa_{ij}\in\{0,1\},\quad \eta_k\in\{0,1\},
&& \forall (i,j)\in\mathcal I,\ k\in[K].
\label{eq:CAPSC_binary}
\end{align}
\end{subequations}
Objective~\eqref{eq:CAPSC_obj} maximizes the number of selected inequalities. Constraint~\eqref{eq:CAPSC_support} requires each selected pair to be supported by every retained sample with margin \(\varepsilon\); constraint~\eqref{eq:CAPSC_retention} enforces the retained sample threshold; and constraint~\eqref{eq:CAPSC_antisymmetry} excludes both directions of the same unordered pair. For model training, a fixed sample of ordered pairs from \(\mathcal I\) is labeled by the optimal CAPSC solution, with \(\kappa_{ij}^\star=1\) defining the positive class and \(\kappa_{ij}^\star=0\) the negative class. Proposition~\ref{prop:capsc_properties} gives the CAPSC properties used for graph repair in Section~\ref{subsec_post_processing}.

\begin{proposition}\label{prop:capsc_properties}
Assume that \(K\ge1\), \(\alpha\in[0,1]\), \(\varepsilon>0\), \(\lceil\alpha K\rceil\ge1\), and \(\mathbf p^{(k)}\in\mathcal D^\star\) for every \(k\in[K]\). For any optimal solution \((\kappa^\star,\eta^\star)\) of \eqref{eq:CAPSC}, let \(E^{\mathrm{CAPSC}}:=\{(i,j)\in\mathcal I:\kappa_{ij}^\star=1\}\), \(G^{\mathrm{CAPSC}}:=G(E^{\mathrm{CAPSC}})\), and \(\mathcal T^\star:=\{k\in[K]:\eta_k^\star=1\}\). Then \(p_i^{(k)}+\varepsilon\le p_j^{(k)}\) for every \((i,j)\in E^{\mathrm{CAPSC}}\) and \(k\in\mathcal T^\star\). Thus, \(E^{\mathrm{CAPSC}}\) is jointly deep dual-optimal, and \(G^{\mathrm{CAPSC}}\) is acyclic with inference closure.
\end{proposition}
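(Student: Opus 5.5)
The argument has three steps, each reading off the constraints of \eqref{eq:CAPSC}. First I would derive the pointwise margin property. Fix \((i,j)\in E^{\mathrm{CAPSC}}\), so \(\kappa^\star_{ij}=1\). Constraint~\eqref{eq:CAPSC_support} then gives \(\sum_{k\in\mathcal V_{ij}}\eta^\star_k\le 0\). Since each \(\eta^\star_k\) is binary, \(\eta^\star_k=0\) for every \(k\in\mathcal V_{ij}\), i.e., \(\mathcal T^\star\cap\mathcal V_{ij}=\varnothing\). By the definition of \(\mathcal V_{ij}\), every \(k\in\mathcal T^\star\) satisfies \(p^{(k)}_i+\varepsilon\le p^{(k)}_j\). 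If \(\mathcal V_{ij}=\varnothing\) the constraint is vacuous, but the conclusion holds trivially because then no sample violates the margin.

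Second, joint deep dual-optimality. Constraint~\eqref{eq:CAPSC_retention} together with \(\lceil\alpha K\rceil\ge1\) gives \(|\mathcal T^\star|\ge1\), so I pick any \(k\in\mathcal T^\star\). By the first step, \(p^{(k)}_i-p^{(k)}_j\le-\varepsilon<0\) for all \((i,j)\in E^{\mathrm{CAPSC}}\), hence \(\mathbf p^{(k)}\in\mathcal O(E^{\mathrm{CAPSC}})\). By assumption \(\mathbf p^{(k)}\in\mathcal D^\star\), so the intersection \(\mathcal D^\star\cap\mathcal O(E^{\mathrm{CAPSC}})\) is nonempty. The nonemptiness of \(\mathcal T^\star\) is the only place the retention constraint is used, and it is essential.

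Third, the graph properties, using the same fixed \(k\in\mathcal T^\star\) as a potential function. For acyclicity, suppose \(i_0\to i_1\to\cdots\to i_L=i_0\) is a directed cycle in \(G^{\mathrm{CAPSC}}\). Summing the strict inequalities \(p^{(k)}_{i_\ell}+\varepsilon\le p^{(k)}_{i_{\ell+1}}\) around the cycle gives \(L\varepsilon\le 0\), which contradicts \(\varepsilon>0\). For inference closure I expect the only real obstacle, because \eqref{eq:CAPSC} imposes no transitivity constraint and closure must come from optimality. Suppose \(j\in R^+(i)\) but \((i,j)\notin E^{\mathrm{CAPSC}}\), and take a path \(i=i_0\to\cdots\to i_L=j\) with \(L\ge2\).

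For every retained \(k'\in\mathcal T^\star\), summing the margins along the path gives \(p^{(k')}_j\ge p^{(k')}_i+L\varepsilon>p^{(k')}_i+\varepsilon\). Hence \(\mathcal V_{ij}\cap\mathcal T^\star=\varnothing\), and setting \(\kappa_{ij}=1\) keeps \eqref{eq:CAPSC_support} satisfied with \(\eta^\star\) unchanged. For antisymmetry \eqref{eq:CAPSC_antisymmetry} I need \(\kappa^\star_{ji}=0\). If instead \((j,i)\in E^{\mathrm{CAPSC}}\), it would close a cycle with the path, which acyclicity has already excluded. The new pair \(i\ne j\) is guaranteed because the path is simple and acyclic. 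The modified solution is therefore feasible with objective one larger, contradicting the optimality of \((\kappa^\star,\eta^\star)\). Hence every reachable pair is present, which establishes inference closure.
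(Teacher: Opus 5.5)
Your proposal is correct and follows the paper's proof essentially step for step. The support constraint with binary \(\eta^\star\) gives the margin property, and nonemptiness of \(\mathcal T^\star\) gives joint deep dual optimality. Summing margins around a cycle gives acyclicity. Closure follows because adding a missing reachable arc \((i,j)\) keeps \eqref{eq:CAPSC} feasible, with acyclicity forcing \(\kappa^\star_{ji}=0\), while raising the objective, which contradicts optimality.
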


\begin{proof}
The proof is provided in Section~\ref{sec:ec:proof_capsc_properties} of the EC appendix. \Halmos
\end{proof}

Proposition~\ref{prop:capsc_properties} shows that an optimal CAPSC solution yields a jointly deep dual-optimal label set that is acyclic and closed under inference. Optimality is required only to establish inference closure, whereas joint support and acyclicity hold for any feasible CAPSC solution. Accordingly, we solve the CAPSC integer program to optimality before generating the training labels. In the implementation, whether the sampled points lie in \(\mathcal D^\star\) and whether the pairwise support conditions hold are evaluated using the LP feasibility and objective tolerances. The resulting computational labels therefore satisfy the properties in Proposition~\ref{prop:capsc_properties} up to numerical tolerances.

\subsection{Feature Representation and Learner Instantiation for L-PDDOI Prediction}\label{subsec_learning_pair}

The CAPSC labels in Section~\ref{subsec_CAPSC} turn L-PDDOI prediction into a supervised binary classification problem. Each ordered pair \((i,j)\in\mathcal I\) receives a label indicating whether \(p_i\le p_j\) belongs to the jointly supported pairwise DDOI set. The prediction task is therefore to score candidate L-PDDOIs before the graph repair and deployment steps impose the selected inequalities jointly. We next describe the pair features and learner used in the computational study.

\paragraph{Feature design.}
The features are designed to capture routing characteristics associated with persistent differences in customer dual prices. Although dual prices are determined by the global master problem, they can reflect the marginal difficulty of covering a customer and the availability of alternative feasible routes. We use three complementary feature groups.

\begin{enumerate}[label=(\roman*),leftmargin=1.7em,itemsep=0.15em,topsep=0.25em]
    \item \emph{Customer features.} The customer group describes the two customers individually, including spatial position, demand, and local accessibility. The group captures whether one customer is intrinsically harder to cover than the other.
    \item \emph{Pair features.} The pair group characterizes the two customers jointly through proximity, relative position, neighborhood interaction, and routing compatibility. The group captures whether the two customers tend to appear in similar route alternatives or create different marginal costs.
    \item \emph{Instance features.} The instance group normalizes the same local pair pattern with respect to the overall problem scale, capacity structure, and spatial distribution.
\end{enumerate}
Section~\ref{sec:ec:pair_features} of the EC appendix provides detailed definitions of these features.

\paragraph{Learner instantiation.}
We select XGBoost as the primary learner based on validation performance among the candidate models \citep{chen2016xgboost}. Table~\ref{tab_ec_cvrp_learner_comparison} in the EC appendix reports their performance on the untouched reserved test split.

\subsection{Postprocessing and Deployment of L-PDDOIs}\label{subsec_post_processing}

Applying a threshold to independently generated pairwise scores turns prediction into a joint deployment problem. When imposed together, the selected arcs may form cycles, imply order relations that were not supported by the raw predictions, or introduce constraints that become redundant through transitivity. We therefore apply graph repair and transitive reduction before incorporating the candidate inequality system into the master problem.

Recall that Section~\ref{subsec_deployment_overview} defines the raw graph \(G^0=([m],E^0)\) and its outgoing neighborhoods \(N_{E^0}^+(i)\). Arcs in \(E^0\) represent candidate L-PDDOIs \(p_i\le p_j\), and \(E^0\subseteq\mathcal I\) excludes self loops. To repair cycles and restore inference closure, we use the pair filtering formulation (PF), which uses only the direct arcs of \(G^0\). We introduce a binary variable \(\delta_{ij}\) for each arc \((i,j)\in E^0\), where \(\delta_{ij}=1\) means that the raw arc \((i,j)\) is retained.

\noindent The repair problem is
\begin{subequations}\label{eq_postprocessing_reduced}
\begin{align}
PF:\quad \max_{\delta}\quad
& \sum_{(i,j)\in E^0}\delta_{ij}
\label{eq_postprocessing_reduced_obj}\\
\text{s.t.}\quad
& \delta_{ij}+\delta_{jk}-\delta_{ik}\le 1,
&& \forall (i,j)\in E^0,\
   k\in N_{E^0}^+(i)\cap N_{E^0}^+(j),
\label{eq_postprocessing_reduced_cons1}\\
& \delta_{ij}+\delta_{jk}\le 1,
&& \forall (i,j)\in E^0,\
   k\in N_{E^0}^+(j)\setminus N_{E^0}^+(i),
\label{eq_postprocessing_reduced_cons2}\\
& \delta_{ij}\in\{0,1\},
&& \forall (i,j)\in E^0.
\label{eq_postprocessing_reduced_cons3}
\end{align}
\end{subequations}
Objective~\eqref{eq_postprocessing_reduced_obj} maximizes the number of selected inequalities. Constraint~\eqref{eq_postprocessing_reduced_cons1} enforces closure when the shortcut \((i,k)\) is available in the raw graph, and Constraint~\eqref{eq_postprocessing_reduced_cons2} excludes two arc implications whose closure arc is unavailable. Since \(E^0\subseteq\mathcal I\), we have \(i\notin N_{E^0}^+(i)\). Constraint~\eqref{eq_postprocessing_reduced_cons2} therefore also rules out selecting both directions of the same unordered pair whenever both arcs are present. Proposition~\ref{prop_pf_feasibility} formalizes the validity of the repair formulation.

\begin{proposition}\label{prop_pf_feasibility}
For any feasible solution \(\delta\) of \eqref{eq_postprocessing_reduced}, define \(E^{\mathrm{rep}}:=\{(i,j)\in E^0:\delta_{ij}=1\}\) and \(G^{\mathrm{rep}}=G(E^{\mathrm{rep}})\). Then \(G^{\mathrm{rep}}\) is a directed acyclic graph (DAG) with inference closure.
\end{proposition}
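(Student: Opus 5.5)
Our plan is to prove inference closure first and then obtain acyclicity from it. For closure, we show that any two consecutive retained arcs \((i,j),(j,k)\in E^{\mathrm{rep}}\) with \(i\ne k\) force \((i,k)\in E^{\mathrm{rep}}\); an induction on path length then gives \(\mathcal R(E^{\mathrm{rep}})\subseteq E^{\mathrm{rep}}\).

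\emph{Step 1 (two-arc closure).} Let \(\delta_{ij}=\delta_{jk}=1\) with \(i\ne k\). Since \((j,k)\in E^0\), we have \(k\in N_{E^0}^+(j)\). If \(k\notin N_{E^0}^+(i)\), then Constraint~\eqref{eq_postprocessing_reduced_cons2} applies to \((i,j)\) and \(k\) and gives \(\delta_{ij}+\delta_{jk}\le1\), a contradiction. Hence \(k\in N_{E^0}^+(i)\cap N_{E^0}^+(j)\). Constraint~\eqref{eq_postprocessing_reduced_cons1} then gives \(\delta_{ik}\ge \delta_{ij}+\delta_{jk}-1=1\), so \((i,k)\in E^{\mathrm{rep}}\).

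\emph{Step 1 (the case \(k=i\)).} If \(k=i\), then \(i\notin N_{E^0}^+(i)\) because \(E^0\subseteq\mathcal I\). Constraint~\eqref{eq_postprocessing_reduced_cons2} with \(k=i\in N_{E^0}^+(j)\setminus N_{E^0}^+(i)\) forbids \(\delta_{ij}=\delta_{ji}=1\). So \(E^{\mathrm{rep}}\) contains no 2-cycles.

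\emph{Step 2 (closure along paths).} Take any directed path \(i=v_0\to v_1\to\dots\to v_\ell=j\) in \(G^{\mathrm{rep}}\) with \(j\ne i\). Because a walk between distinct vertices contains a simple path, we may assume the path is simple. We induct on \(\ell\) to show \((i,j)\in E^{\mathrm{rep}}\). The case \(\ell=1\) is immediate. For \(\ell\ge2\), the induction hypothesis applied to the subpath \(v_0\leadsto v_{\ell-1}\) gives \((v_0,v_{\ell-1})\in E^{\mathrm{rep}}\). Since the path is simple, \(v_0\ne v_\ell\), so Step 1 applied to \((v_0,v_{\ell-1})\) and \((v_{\ell-1},v_\ell)\) gives \((v_0,v_\ell)\in E^{\mathrm{rep}}\). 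This establishes inference closure.

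\emph{Step 3 (acyclicity).} Suppose \(G^{\mathrm{rep}}\) contained a directed cycle. Choose a shortest one, \(v_0\to v_1\to\dots\to v_{\ell-1}\to v_0\); it is simple. There are no self loops, and Step 1 excludes 2-cycles, so \(\ell\ge3\). The path \(v_0\to\dots\to v_{\ell-1}\) is simple with \(v_{\ell-1}\ne v_0\), so Step 2 gives \((v_0,v_{\ell-1})\in E^{\mathrm{rep}}\). Together with the arc \((v_{\ell-1},v_0)\), this forms a 2-cycle, contradicting Step 1. Hence \(G^{\mathrm{rep}}\) is a DAG.

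The main obstacle is the bookkeeping at the \(k=i\) boundary case, i.e., making sure that Constraint~\eqref{eq_postprocessing_reduced_cons2}, rather than Constraint~\eqref{eq_postprocessing_reduced_cons1}, is the one that applies there. The path-simplicity requirement in Step 2 also needs care, so that Step 1 is never invoked with coinciding endpoints. Once these points are handled, the rest of the argument is routine.
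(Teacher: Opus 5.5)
Your proposal is correct and follows essentially the same route as the paper. It derives two-arc closure from Constraint~\eqref{eq_postprocessing_reduced_cons2} and then \eqref{eq_postprocessing_reduced_cons1}, extends closure to longer paths (you induct on simple paths, while the paper contracts a shortest path, which is the same idea), and obtains acyclicity by reducing a cycle to a pair of opposite arcs forbidden by Constraint~\eqref{eq_postprocessing_reduced_cons2} with $k=i$; the only cosmetic difference is that you rule out 2-cycles up front and work with a shortest cycle, whereas the paper applies closure directly to an arbitrary cycle.
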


\begin{proof}
The proof is provided in Section~\ref{sec:ec:proof_pf_feasibility} of the EC appendix. \Halmos
\end{proof}

Solving \eqref{eq_postprocessing_reduced} to optimality can be expensive when the raw prediction graph is dense. We therefore first apply a preprocessing heuristic based on strongly connected components (SCCs) to produce a feasible partial repair and shrink the remaining integer program. The heuristic decomposes \(G^0\) into SCCs, topologically orders the component DAG, and scans the induced node order in reverse. During the scan, it maintains the descendant set implied by the accepted arcs. A raw arc \((i,j)\) is accepted only when every currently implied descendant of \(j\) belongs to \(N_{E^0}^+(i)\). The test prevents \((i,j)\) from creating an implication whose required shortcut arc is unavailable in \(G^0\). Oversized SCCs are first split into smaller chunks by degree priority, so that the scan can break large cyclic blocks before applying the same closure test. The accepted arcs are fixed in the subsequent repair integer program, which optimizes over the remaining undecided arcs. Algorithm~\ref{alg:ecSccRepair} and Proposition~\ref{prop:ecSccRepair} of the EC appendix give the procedure and prove that its fixed arcs preserve PF feasibility.

Even after graph repair, the retained pairwise inequalities may contain substantial redundancy. For example, if \(p_i\le p_j\) and \(p_j\le p_k\) are both imposed, then adding \(p_i\le p_k\) does not further restrict the dual feasible region. We therefore compress the repaired graph while preserving the induced order cone. Theorem~\ref{theorem_same_feasible_region} establishes that, for pairwise systems represented by DAGs, this order cone is characterized exactly by the reachability relation of the graph.

\begin{theorem}\label{theorem_same_feasible_region}
Let \(E_1,E_2\subseteq\mathcal I\) be two arc sets whose graphs \(G(E_1)\) and \(G(E_2)\) are DAGs. Then \(\mathcal O(E_1)=\mathcal O(E_2)\) if and only if \(\mathcal R(E_1)=\mathcal R(E_2)\).
\end{theorem}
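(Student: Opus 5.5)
The plan is to prove both directions separately. Each direction rests on one elementary observation about the order cone. First, every \(\mathbf p\in\mathcal O(E)\) satisfies \(p_i\le p_j\) for every \((i,j)\in\mathcal R(E)\). Second, for a DAG the converse holds: the inequalities indexed by \(\mathcal R(E)\) cut out exactly \(\mathcal O(E)\), and every pair outside \(\mathcal R(E)\) can be violated strictly by some point of \(\mathcal O(E)\).

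\emph{Sufficiency} (\(\mathcal R(E_1)=\mathcal R(E_2)\Rightarrow\mathcal O(E_1)=\mathcal O(E_2)\)). This direction does not need acyclicity. Take \(\mathbf p\in\mathcal O(E_1)\) and an arc \((i,j)\in E_2\). Then \((i,j)\in\mathcal R(E_2)=\mathcal R(E_1)\), so \(G(E_1)\) contains a directed path \(i=v_0\to v_1\to\cdots\to v_\ell=j\). Chaining \(p_{v_{t}}\le p_{v_{t+1}}\) along this path gives \(p_i\le p_j\). Hence \(\mathbf p\in\mathcal O(E_2)\). The reverse inclusion follows by symmetry.

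\emph{Necessity} (\(\mathcal O(E_1)=\mathcal O(E_2)\Rightarrow\mathcal R(E_1)=\mathcal R(E_2)\)). I argue by contraposition. Suppose \((i,j)\in\mathcal R(E_2)\setminus\mathcal R(E_1)\); the other case is symmetric. I will construct \(\mathbf p\in\mathcal O(E_1)\) with \(p_i>p_j\). Since \((i,j)\in\mathcal R(E_2)\), sufficiency's chaining argument shows every point of \(\mathcal O(E_2)\) has \(p_i\le p_j\), so \(\mathbf p\notin\mathcal O(E_2)\) and the cones differ.

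For the construction, let \(D:=\{j\}\cup R_{E_1}^+(j)\) be the set of nodes reachable from \(j\), including \(j\) itself, and define
\[
p_v:=\begin{cases}1,& v\in D,\\ 0,& v\notin D.\end{cases}
\]
This \(\mathbf p\) lies in \(\mathcal O(E_1)\): \(D\) is closed under outgoing arcs, so no arc \((u,v)\in E_1\) has \(u\in D\) and \(v\notin D\), and therefore \(p_u\le p_v\) for every arc. This gives \(p_j=1\). For \(p_i\), note \(i\ne j\) because \(\mathcal I\) excludes self loops and reachability is defined over distinct nodes. Since \((i,j)\notin\mathcal R(E_1)\), we need \(i\notin D\). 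If \(i\in D\), then \(j\leadsto i\) in \(G(E_1)\), and this does not contradict \((i,j)\notin\mathcal R(E_1)\). So a different construction is needed.

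This is the step where care is required. The fix is to use the down-set instead: let \(U:=\{i\}\cup\{v: v\leadsto i \text{ in } G(E_1)\}\), set \(p_v=1\) for \(v\in U\) and \(p_v=2\) otherwise, and then lower \(p_i\) relative to \(p_j\) appropriately. A cleaner way is to take \(p_v:=1\) if \(i\leadsto v\) or \(v=i\), and \(p_v:=0\) otherwise. This set is up-closed, so \(\mathbf p\in\mathcal O(E_1)\), and \(p_i=1\). Also \(p_j=0\) precisely because \(j\) is not reachable from \(i\) in \(G(E_1)\), which is exactly \((i,j)\notin\mathcal R(E_1)\). So \(p_i>p_j\), as required, and acyclicity is not even needed here.

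Thus the main subtlety is choosing the indicator of the right reachable set: the nodes reachable from \(i\), not from \(j\). With that choice both directions are routine. The DAG hypothesis is harmless, and it matches how the theorem is applied to \(G^{\mathrm{rep}}\) and \(G^{\mathrm{tr}}\).
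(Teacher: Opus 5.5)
Your final argument is correct, but it uses a different witness from the paper's. Both proofs handle sufficiency by the same path-chaining argument. Both prove necessity by exhibiting a point of the cone of the graph that lacks the reachable pair $(i,j)$ but violates $p_i\le p_j$.

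The paper builds this point from a topological order. It adds the reverse arc $(j,i)$ to the DAG lacking the pair; no cycle arises because that graph has no path from $i$ to $j$. It then sets $p_v$ to the position of $v$, so every arc inequality holds strictly and $p_j<p_i$. You instead take the $0/1$ indicator of $\{i\}\cup R^+_{E_1}(i)$. This point lies in $\mathcal O(E_1)$ because the set is closed under outgoing arcs, and it gives $p_i=1>0=p_j$.

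Your route is more elementary and uses acyclicity nowhere, so it shows the equivalence holds for arbitrary arc sets in $\mathcal I$. The DAG hypothesis is needed only for the paper's topological order. In exchange, the paper's witness satisfies every inequality strictly, a property the theorem does not require.

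When you write this up, delete the two abandoned attempts. They fail for a more basic reason than the one you give. The indicator of $\{j\}\cup R^+_{E_1}(j)$ gives $p_j$ the largest value used, and your down-set construction gives $p_i$ the smallest. Neither can therefore produce $p_i>p_j$, whether or not $i\in D$. A correct down-set variant sets $p_v=0$ for $v=j$ and for every $v$ with a path to $j$ in $G(E_1)$, and $p_v=1$ otherwise.
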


\begin{proof}
The proof is provided in Section~\ref{sec:ec:proof_same_feasible_region} of the EC appendix. \Halmos
\end{proof}

Theorem~\ref{theorem_same_feasible_region} justifies applying the standard transitive reduction to the repaired DAG \citep{aho1972transitive}. The reduction removes redundant pair inequalities while preserving the induced order cone, providing an exact compression of the deployed pair system used in CG.

\subsection{Certification and Bound Recovery of L-PDDOIs}\label{subsec_bound_recovery}

The postprocessing steps enforce graph structure, while preservation of the baseline optimal dual face requires an additional check. For each retained candidate \(e=(i,j)\), the primal master contains an L-PDDOI artificial variable \(\xi_e\ge 0\) with zero cost and column \(\mathbf b_e=\mathbf e_i-\mathbf e_j\), the primal counterpart of \(p_i-p_j\le 0\). A positive \(\xi_e\) identifies an artificial column used by the returned augmented master solution. Recovery uses this activity as a release signal tied to that solution. For any selected candidate set \(\mathcal E\), let \(z(\mathcal E)\) denote the bound obtained after full CG convergence over \(\Omega\) with the corresponding artificial variables.

Algorithm~\ref{alg:pair_recovery} states the recovery loop in exact arithmetic. BPC implementations often organize pricing as a hierarchy of routines, applying inexpensive heuristics before invoking progressively stronger pricing procedures \citep{pessoa2020generic,you2026two}. Following this practice, recovery uses \(J\) pricing stages \(\mathcal L_1,\ldots,\mathcal L_J\), with exact pricing reserved for the final stage. At round \(t\), let \(\mathcal E^{(t)}\) be the current candidate set, \(\mathcal E_{\mathrm{act}}^{(t)}:=\{e\in\mathcal E^{(t)}:\xi_e>0\}\) its active subset, and \(\mathcal E_{\mathrm{rel}}^{(t)}\) the subset released in that round. The procedure releases the active subset and, when its size does not exceed the tailing off parameter \(K_{\mathrm{tail}}\), releases all remaining candidates. Gap target recovery additionally requires a valid upper bound \(U>0\) and a target gap \(\Gamma\), where \(\operatorname{gap}(z,U):=(U-z)/U\). The gap test is applied only after the final pricing stage has completed exact pricing.

{
\DontPrintSemicolon
\begin{algorithm}[H]
\small
\caption{Recovery of candidate L-PDDOIs}\label{alg:pair_recovery}
\KwData{Initial candidate set \(\mathcal E^{(0)}\); pricing stages \(\mathcal L_1,\ldots,\mathcal L_J\); \(K_{\mathrm{tail}}\); stopping rule; \(U,\Gamma\) for gap target stopping.}
\KwResult{Retained candidate set and valid root bound.}
Set \(t\leftarrow0\), \(s\leftarrow1\)\;
\While{\(s\le J\)}{
Run CG with pair set \(\mathcal E^{(t)}\) and pricing stage \(\mathcal L_s\); obtain the current RMP value \(\widehat z^{t,s}\) and \(\{\xi_e:e\in\mathcal E^{(t)}\}\)\;
\If{gap target stopping is used, \(s=J\), and \(\operatorname{gap}(\widehat z^{t,J},U)\le\Gamma\)}{return \((\mathcal E^{(t)},\widehat z^{t,J})\)\;}
Set \(\mathcal E_{\mathrm{act}}^{(t)}\leftarrow\{e\in\mathcal E^{(t)}:\xi_e>0\}\)\;
\If{\(\mathcal E_{\mathrm{act}}^{(t)}=\varnothing\)}{
    \If{\(s=J\)}{return \((\mathcal E^{(t)},\widehat z^{t,J})\)\;}
    Set \(s\leftarrow s+1\) and continue\;
}
Set \(\mathcal E_{\mathrm{rel}}^{(t)}\leftarrow\mathcal E^{(t)}\) if \(|\mathcal E_{\mathrm{act}}^{(t)}|\le K_{\mathrm{tail}}\), and \(\mathcal E_{\mathrm{rel}}^{(t)}\leftarrow\mathcal E_{\mathrm{act}}^{(t)}\) otherwise\;
Set \(\mathcal E^{(t+1)}\leftarrow\mathcal E^{(t)}\setminus\mathcal E_{\mathrm{rel}}^{(t)}\), reset the pricing state, and set \(t\leftarrow t+1\)\;
}
\end{algorithm}
}

When \(J=1\), recovery completes CG with exact pricing before activity is inspected in every round. For \(J>1\), recovery applies the release loop through stronger pricing stages. In either case, the objective at an intermediate heuristic stage is an RMP value, so gap stopping is deferred. At the final stage, full exact pricing and exact solution of the augmented master give \(\widehat z^{t,J}=z(\mathcal E^{(t)})\), so the gap test is valid. Without gap target stopping, zero activity in every pair artificial variable at this stage gives the condition used in Proposition~\ref{prop_bound_recovery_certificate}.

The implementation replaces the exact test \(\xi_e>0\) by \(\xi_e>\epsilon_{\mathrm{act}}\) to accommodate floating point LP solutions. Under this test, an empty numerical active set gives \(0\le\xi_e\le\epsilon_{\mathrm{act}}\), whereas Proposition~\ref{prop_bound_recovery_certificate} assumes \(\xi_e=0\). The numerical experiments verify the recovered objective against the independently recorded baseline CG bound after exact pricing.

\begin{proposition}\label{prop_bound_recovery_certificate}
Assume that the original master LP and its augmentation by \(\mathcal E\) attain finite optimal solutions. For any \(\mathcal E'\subseteq\mathcal E\), \(z(\mathcal E')\ge z(\mathcal E)\). Moreover, \textnormal{(i)} \(\mathcal E\) is jointly deep dual-optimal, \textnormal{(ii)} \(z(\mathcal E)=z^\star\), and \textnormal{(iii)} the augmented master has an optimal solution satisfying \(\xi_e=0\) for every \(e\in\mathcal E\) are equivalent. When these conditions hold, every optimal solution of the augmented dual belongs to \(\mathcal D^\star\), and \(\mathcal E\) is a certified L-PDDOI set.
\end{proposition}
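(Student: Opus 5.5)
The whole argument runs through LP duality for the augmented master. Adding the zero-cost columns \(\mathbf b_e=\mathbf e_i-\mathbf e_j\) for \(e=(i,j)\in\mathcal E\) to \eqref{formulation_mp} gives a primal whose dual is \eqref{formulation:MP_dual} with the extra constraints \(p_i-p_j\le 0\) for \((i,j)\in\mathcal E\). Its feasible region is therefore \(\mathcal D\cap\mathcal O(\mathcal E)\), where \(\mathcal D\) is the feasible region of \eqref{formulation:MP_dual}. By the finiteness assumption and strong duality,
\[
z(\mathcal E)=\max\{\mathbf 1^\top\mathbf p:\mathbf p\in\mathcal D\cap\mathcal O(\mathcal E)\},
\qquad
z^\star=\max\{\mathbf 1^\top\mathbf p:\mathbf p\in\mathcal D\}.
\]

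\emph{Monotonicity.} If \(\mathcal E'\subseteq\mathcal E\), then \(\mathcal O(\mathcal E)\subseteq\mathcal O(\mathcal E')\). The dual region for \(\mathcal E'\) is therefore larger, so \(z(\mathcal E')\ge z(\mathcal E)\). The same containment with \(\mathcal E'=\varnothing\) gives \(z(\mathcal E)\le z^\star\). If the augmented problem for \(\mathcal E'\) were unbounded, its primal would be infeasible; but the primal for \(\mathcal E'\) contains the primal of the original master, which is feasible, so this case cannot occur.

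\emph{(i)\(\Leftrightarrow\)(ii).} If some \(\mathbf p\in\mathcal D^\star\cap\mathcal O(\mathcal E)\) exists, it is feasible for the augmented dual with value \(z^\star\). Hence \(z(\mathcal E)\ge z^\star\), and with the bound above, \(z(\mathcal E)=z^\star\). Conversely, if \(z(\mathcal E)=z^\star\), then any augmented dual optimum lies in \(\mathcal D\), lies in \(\mathcal O(\mathcal E)\), and attains value \(z^\star\). It therefore lies in \(\mathcal D^\star\cap\mathcal O(\mathcal E)\). The same argument gives the closing claim: under (ii), every augmented dual optimum belongs to \(\mathcal D^\star\).

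\emph{(ii)\(\Leftrightarrow\)(iii).} For (iii)\(\Rightarrow\)(ii), take an augmented optimum with \(\xi=0\). Its \(x\)-part is feasible for the original master with the same cost, so \(z^\star\le z(\mathcal E)\), and hence equality holds. For (ii)\(\Rightarrow\)(iii), take an optimal solution \(x^\star\) of the original master. Padding it with \(\xi=0\) gives a feasible point of the augmented master with cost \(z^\star=z(\mathcal E)\), so it is optimal.

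The main care point is the use of finiteness to invoke strong duality and attainment on both sides, including for subsets \(\mathcal E'\). Apart from that, the steps are direct containment and padding arguments. Once the equivalence holds, calling \(\mathcal E\) a certified L-PDDOI set is a matter of definition.
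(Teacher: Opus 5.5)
Your proposal is correct and follows essentially the same route as the paper's proof. Both write the augmented master and its dual with region \(\mathcal D\cap\mathcal O(\mathcal E)\), get monotonicity and \(z(\mathcal E)\le z^\star\) from containment of the dual feasible regions, prove (i)\(\Leftrightarrow\)(ii) by feasibility of a point of \(\mathcal D^\star\cap\mathcal O(\mathcal E)\) and by membership of any augmented dual optimum in \(\mathcal D^\star\), and prove (ii)\(\Leftrightarrow\)(iii) by padding an original optimum with \(\xi=\mathbf 0\) or dropping \(\xi\). Your extra check that the augmentation by any \(\mathcal E'\subseteq\mathcal E\) still attains a finite optimum is a small addition that the paper makes only later, in the proof of Corollary~\ref{cor:recoveryTermination}.
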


\begin{proof}
The proof is provided in Section~\ref{sec:ec:proof_bound_recovery_certificate} of the EC appendix. \Halmos
\end{proof}

Corollary~\ref{cor:recoveryTermination} in the EC appendix shows that exact recovery without gap target stopping terminates after at most \(|\mathcal E^{(0)}|\) release rounds and returns the baseline CG bound.

\section{Numerical Study}\label{sec_exp_design}

The numerical study evaluates L-PDDOIs for stabilizing root CG on CVRP and VRPTW instances, where \(n\) denotes the number of customers. All experiments use RouteOpt, an exact BPC solver for both problems \citep{you2026routeopt}. Section~\ref{subsec_protocol} presents the experimental protocol, followed by the CVRP and VRPTW results in Sections~\ref{sec_cvrp} and~\ref{subsec_vrptw_case}, respectively.

\subsection{Experimental Protocol}\label{subsec_protocol}

\subsubsection{Implementation and Computing Environment}\label{subsubsec_implementation}

All experiments are conducted on an Ubuntu 24.04.4 workstation equipped with an AMD Ryzen 9 9950X3D processor and 60~GiB of memory. Gurobi 13.0.1 \citep{gurobi} is used to solve all linear and mixed-integer programs in this paper. Each timed run uses a single Gurobi thread and no time limit.

\subsubsection{Instance Sets and Learning Splits}\label{subsubsec_instances}

Table~\ref{tab_protocol_instances} summarizes the seven instance sets and their roles in the computational study. For the CVRP, XML500-Learn is used for training and model selection, XML500-Test for solver evaluation, and the CVRPLIB X-series (X100) for transfer evaluation. The XML500 instances are generated using the official CVRPLIB generator \citep{queiroga202110000}; Section~\ref{sec:ec:xml500_generation} of the EC appendix describes the generation procedure and profile matching in detail. For the VRPTW, the synthetic 200-customer Gehring--Homberger sets S-GH200-Learn and S-GH200-Test are used for training and validation, and for prediction reporting and solver evaluation, respectively. Separate official Gehring--Homberger sets with 200 and 400 customers are used for transfer evaluation \citep{gehring2002parallelization}. Section~\ref{appendix_gh200_generation} of the EC appendix describes the synthetic instance generator.
\begin{table}[htbp]
\centering
\caption{Instance sets and their roles in the numerical study.}
\label{tab_protocol_instances}
\vspace{-3pt}
\footnotesize
\setlength{\tabcolsep}{7pt}
\renewcommand{\arraystretch}{1.08}
\begin{tabular}{@{}llrrll@{}}
\toprule
Problem & Instance set & \#Inst. & \#Customers & Source & Role \\
\midrule
CVRP  & XML500-Learn    & 500 & 500       & Generated (XML)             & Training and model selection \\
CVRP  & XML500-Test     & 200 & 500       & Generated (XML)             & Main solver evaluation \\
CVRP  & X100            & 100 & 100--1000 & CVRPLIB X-series            & Transfer evaluation \\
\midrule
VRPTW & S-GH200-Learn   & 300 & 200       & Generated (S-GH200)         & Training and validation \\
VRPTW & S-GH200-Test    &  60 & 200       & Generated (S-GH200)         & Prediction and solver evaluation \\
VRPTW & GH200-Benchmark &  30 & 200       & Official Gehring--Homberger & Transfer evaluation \\
VRPTW & GH400-Benchmark &  30 & 400       & Official Gehring--Homberger & Size transfer evaluation \\
\bottomrule
\end{tabular}
\vspace{-5pt}
\end{table}

\subsubsection{Methods, Metrics, and Reporting Conventions}\label{subsubsec_methods}

The four primary root CG configurations are the proposed \emph{L-PDDOI} and \emph{L-PDDOI-Rec} methods, the unstabilized \emph{Default} configuration, and the \emph{Auto-Stab} scheme of \citet{pessoa2018automation}. L-PDDOI directly deploys the repaired and reduced candidate set without certification, whereas L-PDDOI-Rec applies numerical recovery and verifies the recovered bound against the baseline. The CVRP study further includes two ablation variants that replace the learned L-PDDOIs with either dual-value predictions obtained by regression (\emph{Dual-Reg}) or an equal number of randomly selected pairs consistent with an acyclic ordering (\emph{Rand-pair}). All reported root CG experiments use the same \(ng\)-route relaxation, initial RMP, pricing routines, reduced-cost tolerance, and column-management policy. Cut separation and the vehicle-count constraint are disabled.

We distinguish the time spent in root CG from the overhead associated with model deployment. Specifically, \(T_{\mathrm{CG}}\) measures the elapsed time from the first RMP solve through the completion of exact pricing, after the candidate set has been constructed. The measure includes pricing time \(T_{\mathrm{price}}\) and LP reoptimization time \(T_{\mathrm{LP}}\). Prediction time \(T_{\mathrm{pred}}\) includes feature construction and model inference, while \(T_{\mathrm{post}}\) records postprocessing. Total runtime is \(T_{\mathrm{pred}}+T_{\mathrm{post}}+T_{\mathrm{CG}}\).

The remaining measures are organized into three categories. For an evaluated candidate set \(\mathcal E\), \emph{solution quality metrics} include normalized bound loss \(\Delta_{\mathrm{bd}}(\mathcal E):=[z^\star-z(\mathcal E)]/\max\{|z^\star|,1\}\) and the gap to the best known solution (BKS), \(\operatorname{gap}_{\mathrm{BKS}}(\mathcal E):=[U_{\mathrm{BKS}}-z(\mathcal E)]/U_{\mathrm{BKS}}\), where \(U_{\mathrm{BKS}}\) is the BKS value. \emph{Workload metrics} record the number of pricing iterations (\#Iter.), route columns at termination (\#Col.), recovery rounds (\#Rec.), and active L-PDDOI artificial columns (APCs) at termination (\#Act.~APC). An APC is numerically active when \(\xi_e>\epsilon_{\mathrm{act}}\). Finally, \emph{deployment metrics} describe the number of raw arcs (\#Raw~arcs), the size of the largest strongly connected component (Max.~SCC), and the number of candidate L-PDDOIs deployed initially (\#Init.~L-PDDOIs).

Unless otherwise specified in a table note, runtimes, iteration counts, column counts, raw arc counts, and deployed L-PDDOI counts are reported as geometric means because their values span several orders of magnitude. Bound loss, optimality gap, Max.~SCC, and the number of recovery rounds are reported as arithmetic means. Because the number of active L-PDDOI artificial columns can be zero, \#Act.~APC uses the shifted geometric mean obtained by adding one before aggregation and subtracting one afterward. Time reduction is computed as \(100\bigl[1-\operatorname{GM}_i(T_i/T_i^{\mathrm{Default}})\bigr]\), where \(\operatorname{GM}_i\) is the geometric mean over paired instances.

\subsubsection{Parameter Settings and Reproducibility}\label{subsubsec_parameters}

The dual sampler uses \(M=10^6\), and CAPSC uses \(K=20\), \(\alpha=0.8\), and \(\varepsilon=10^{-6}\). The deployment cutoff was fixed at \(\tau_{\mathrm{pred}}=0.5\) before any solver-level evaluation and was used in all primary experiments. XML500-Test and X100 played no role in fixing this value. The threshold sweep in Section~\ref{subsec_cvrp_frontier} is reported solely as a post hoc sensitivity analysis. The settings specific to the CVRP and VRPTW, including those for Auto-Stab and recovery, are reported in Sections~\ref{sec_cvrp} and~\ref{subsec_vrptw_case}, respectively.

The XGBoost pair predictor \citep{chen2016xgboost} is fitted using the training instances. Prediction quality is evaluated on test instances excluded from fitting and validation, and all pairs associated with the same instance remain in one split. XML500-Test is reserved for solver evaluation, whereas S-GH200-Test is used for both prediction reporting and solver evaluation. Sections~\ref{appendix_cvrp_learning_training} and~\ref{sec:ec:vrptw_features} of the EC appendix give the exact data splits and learning protocols.

For reproducibility and reuse, we will release the instance generators, feature construction and model training code, solver integration, records for each instance, and the complete selected XGBoost parameters in a public repository after acceptance.

\subsection{Root Node Stabilization for the CVRP}\label{sec_cvrp}

The CVRP has deterministic customer demands, one depot, homogeneous vehicle capacity, and pairwise travel costs. Each route starts and ends at the depot and serves customers within capacity. A feasible solution covers every customer once.

For the CVRP, Auto-Stab uses the RouteOpt default settings, with penalty coefficient $\gamma=0.12$, delta scale $1$, and initial extra scale $20$. L-PDDOI-Rec uses three pricing stages ($J=3$), consisting of light heuristic pricing, heavy heuristic pricing, and exact pricing. It sets $K_{\mathrm{tail}}=20$ and $\epsilon_{\mathrm{act}}=10^{-3}$, and stops after exact pricing when no L-PDDOI artificial variable exceeds the activity tolerance. The recovered bound is then checked against the baseline CG bound.

The CVRP learner comparison uses a common representation with 83 candidate features. On the reserved test split of XML500-Learn, XGBoost attains an average precision (AP) of $0.9913$, where $\mathrm{AP}:=\sum_k(R_k-R_{k-1})P_k$ and $P_k$ and $R_k$ denote precision and recall at the $k$th score threshold. It also attains an F1 score of $0.9498$, where $\mathrm{F1}:=2PR/(P+R)$ and $P$ and $R$ denote precision and recall for the predicted labels. Both values are the highest among the four candidate learners. The final deployment predictor is trained separately using the 32 features retained after screening and attains an AP of $0.9909$ and an F1 score of $0.9484$ on the same reserved test split.

\subsubsection{Main Computational Results}\label{subsec_cvrp_main}

Table~\ref{tab_cvrp_main} compares the six root CG configurations defined in Section~\ref{subsubsec_methods} on XML500-Test. The comparison evaluates overall computational performance, the cost of preserving the baseline CG bound through recovery, and the contribution of the learned L-PDDOIs beyond contraction of the dual space.

\begin{table}[htbp]
\centering
\caption{Root column generation performance on XML500-Test.}
\label{tab_cvrp_main}
\vspace{-3pt}
\footnotesize
\setlength{\tabcolsep}{2.0pt}
\renewcommand{\arraystretch}{1.08}
\begin{tabular}{@{}lrrrrrrrrrrr@{}}
\toprule
& \multicolumn{3}{c}{CG time} 
& \multicolumn{2}{c}{Overhead} 
& \multicolumn{2}{c}{Performance} 
& \multicolumn{1}{c}{Deployment}
& \multicolumn{2}{c}{CG work} 
& \multicolumn{1}{c}{Recovery} \\
\cmidrule(lr){2-4}
\cmidrule(lr){5-6}
\cmidrule(lr){7-8}
\cmidrule(lr){9-9}
\cmidrule(lr){10-11}
\cmidrule(l){12-12}
Method 
& \(T_{\mathrm{price}}\)/s 
& \(T_{\mathrm{LP}}\)/s 
& \(T_{\mathrm{CG}}\)/s 
& Pred./s 
& Post./s 
& Red./\% 
& \(\Delta_{\mathrm{bd}}/\%\)
& \makecell{\#Init.\\L-PDDOIs}
& \#Iter. 
& \#Col. 
& \#Rec. \\
\midrule
Default     
& 79.4 & 36.3 & 119.9 & -- & -- & --   & --  & --    & 733 & 24,548 & --  \\
Auto-Stab   
& 69.1 & 31.5 & 103.4 & -- & -- & 13.7 & --  & --    & 573 & 20,662 & --  \\
\addlinespace[2pt]
L-PDDOI
& 10.4 & 1.6 & 12.3 & 0.19 & 0.26 & 89.7 & 1.3 & 2,045 & 64 & 4,689 & -- \\
L-PDDOI-Rec
& 44.4 & 9.0 & 54.2 & 0.19 & 0.26 & 54.8 & 0.0 & 2,045 & 326 & 10,601 & 8.8 \\
\addlinespace[2pt]
Dual-Reg
& 6.2 & 0.8 & 7.2 & 0.01 & -- & 94.0 & 7.8 & 499 & 40 & 1,984 & -- \\
Rand-pair
& 13.2 & 1.8 & 15.5 & -- & -- & 87.1 & 48.0 & 2,045 & 115 & 7,743 & -- \\
\bottomrule
\end{tabular}
\vspace{-5pt}
\end{table}

\paragraph{Direct deployment and recovery.}
At $\tau_{\mathrm{pred}}=0.5$, direct L-PDDOI reduces the geometric mean root CG time from $119.9$ to $12.3$ seconds, an $89.7\%$ reduction. Pricing and LP times fall from $79.4$ and $36.3$ seconds to $10.4$ and $1.6$ seconds, and pricing iterations from $733$ to $64$. Feature construction and prediction add $0.19$ seconds in total, while postprocessing adds $0.26$ seconds. The mean bound loss is $1.3\%$.
L-PDDOI-Rec matches the recorded baseline CG bound and reduces the geometric mean root CG time to $54.2$ seconds, a $54.8\%$ reduction, with an average of $8.8$ recovery rounds.
Direct deployment provides greater time savings, whereas recovery restores the recorded baseline CG bound. Section~\ref{subsec_cvrp_postprocessing} examines recovery cost.

\paragraph{Comparison with the stabilization baseline.}
Auto-Stab also preserves the baseline CG bound and reduces the geometric mean root CG time to $103.4$ seconds, a $13.7\%$ reduction. Under the same bound requirement, L-PDDOI-Rec reaches $54.2$ seconds and a $54.8\%$ reduction. Figure~\ref{fig_cvrp_scatter_main} complements these aggregate results with comparisons by instance against Default. Both L-PDDOI and L-PDDOI-Rec are faster than Default on all $200$ XML500-Test instances, so the aggregate reductions are not driven by a small subset of instances. The separation between the two point clouds shows the additional time required for bound recovery.

\begin{figure}[htbp]
\centering
\caption{Root CG time by instance relative to Default on XML500-Test. Points below the diagonal are faster.}
\label{fig_cvrp_scatter_main}
\vspace{2pt}
\includegraphics[width=0.54\linewidth]{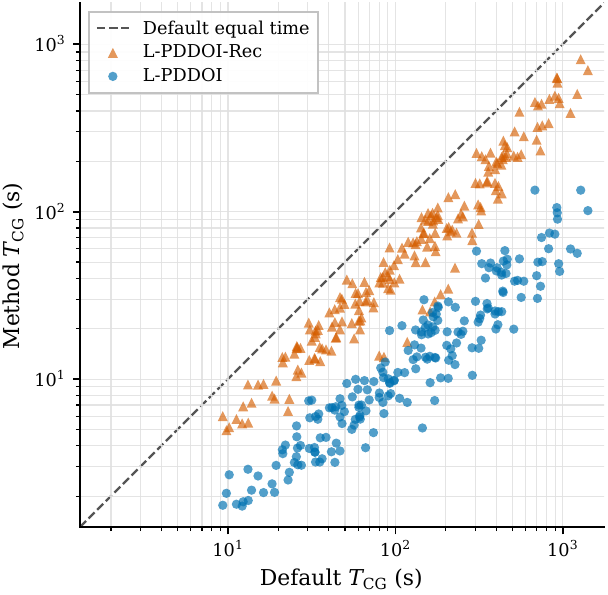}
\vspace{-5pt}
\end{figure}

\paragraph{Comparison with the ablation baselines.}
Dual-Reg replaces the pair classifier with regressed dual values and achieves a $94.0\%$ time reduction with a mean bound loss of $7.8\%$. Rand-pair draws the same number of candidate inequalities from a random acyclic ordering and achieves an $87.1\%$ time reduction with a mean bound loss of $48.0\%$. Both ablations accelerate CG by contracting the dual region, but their bound losses exceed the $1.3\%$ obtained by L-PDDOI. The comparison indicates that contraction drives the computational improvement, whereas the learned L-PDDOIs improve compatibility with the optimal dual face.

\subsubsection{Threshold Effects on Time Reduction and Bound Loss}\label{subsec_cvrp_frontier}

The prediction threshold $\tau_{\mathrm{pred}}$ controls how aggressively candidate L-PDDOIs are selected. A lower threshold admits more pairs, which can strengthen stabilization but also increase the risk of bound loss. Figure~\ref{fig_cvrp_time_reduction_loss_frontier} illustrates the resulting tradeoff on XML500-Test. As $\tau_{\mathrm{pred}}$ increases from $0.5$ to $0.9$, the time reduction decreases from $89.7\%$ to $71.4\%$, while the mean bound loss falls from $1.3\%$ to $0.2\%$. The intermediate thresholds follow the same monotone pattern.

\begin{figure}[htbp]
\centering
\caption{Time reduction and mean bound loss across deployment thresholds on XML500-Test.}
\label{fig_cvrp_time_reduction_loss_frontier}
\vspace{2pt}
\includegraphics[width=0.82\linewidth]{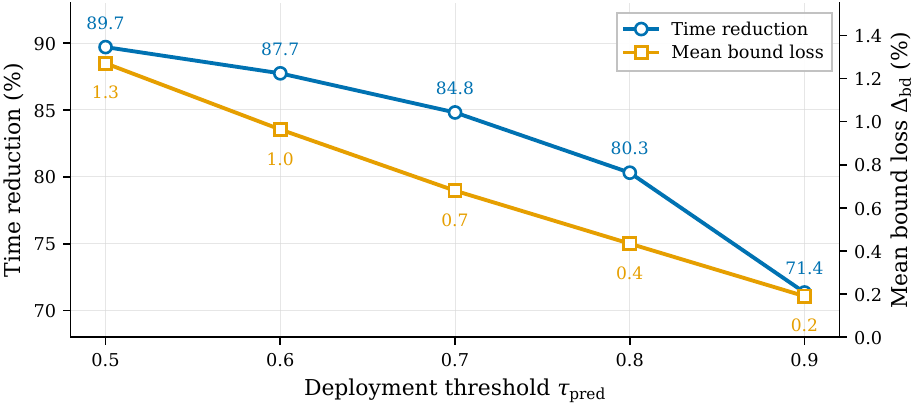}
\vspace{-5pt}
\end{figure}

\paragraph{Effect on the predicted graph.}
Table~\ref{tab_cvrp_threshold_sensitivity} shows how the prediction threshold affects the raw graph and deployed L-PDDOI set. As \(\tau_{\mathrm{pred}}\) decreases from \(0.9\) to \(0.5\), the raw arc count rises from \(100{,}519\) to \(119{,}794\), while the mean size of the largest SCC increases from \(1.0\) to \(157.6\) nodes. Despite this greater density, graph repair and transitive reduction reduce the deployed set from \(10{,}649\) to \(2{,}045\) L-PDDOIs and LP time per iteration from \(0.048\) to \(0.025\) seconds because denser prediction graphs contain more redundant arcs that can be removed.

\begin{table}[H]
\centering
\caption{Threshold deployment diagnostics on XML500-Test.}
\label{tab_cvrp_threshold_sensitivity}
\vspace{-3pt}
\footnotesize
\setlength{\tabcolsep}{7pt}
\renewcommand{\arraystretch}{1.08}
\begin{tabular}{@{}lrrrrr@{}}
\toprule
Metric & $\tau_{\mathrm{pred}}=0.90$ & $0.80$ & $0.70$ & $0.60$ & $0.50$ \\
\midrule
\#Raw arcs & 100,519 & 108,028 & 112,734 & 116,460 & 119,794 \\
Max. SCC & 1.0 & 4.1 & 15.2 & 57.7 & 157.6 \\
\#Init. L-PDDOIs & 10,649 & 6,776 & 4,571 & 3,100 & 2,045 \\
$T_{\mathrm{LP}}/\#\mathrm{Iter.}$/s & 0.048 & 0.038 & 0.033 & 0.029 & 0.025 \\
\bottomrule
\end{tabular}
\vspace{-5pt}
\end{table}

\subsubsection{Postprocessing and Bound Recovery}\label{subsec_cvrp_postprocessing}

Table~\ref{tab_cvrp_postprocessing} isolates graph repair, transitive reduction, and recovery at $\tau_{\mathrm{pred}}=0.5$. Raw Prediction reduces root CG time to $10.1$ seconds but has mean and maximum bound losses of $9.5\%$ and $66.2\%$. Repair Only reduces these losses to $1.3\%$ and $9.7\%$, respectively, while increasing root CG time to $18.0$ seconds. Consistent with Theorem~\ref{theorem_same_feasible_region}, transitive reduction preserves the same bound loss profile while reducing the candidate set from $117{,}457$ to $2{,}045$ inequalities. The Full pipeline then lowers LP time from $4.7$ to $1.6$ seconds and root CG time from $18.0$ to $12.3$ seconds.

The sharp decrease in bound loss after repair suggests that deployment error is driven not only by individual false-positive arcs, but also by the additional order relations created when predictions are composed. Transitive reduction addresses a different source of cost. It leaves the reachability relation and order cone unchanged but removes constraints that would otherwise enlarge every RMP solve and increase recovery overhead.

The recovery rows quantify the benefit of applying recovery to the compact inequality set. Starting from the full postprocessing pipeline reduces the average number of recovery rounds from $56.3$ to $8.8$ and the geometric mean root CG time from $110.6$ to $54.2$ seconds.

\begin{table}[b]
\centering
\caption{Ablation of postprocessing and recovery on XML500-Test.}
\label{tab_cvrp_postprocessing}
\vspace{-3pt}
\footnotesize
\setlength{\tabcolsep}{4.0pt}
\renewcommand{\arraystretch}{1.08}
\begin{tabular}{@{}lrrrrrrr@{}}
\toprule
& \multicolumn{1}{c}{Deployment}
& \multicolumn{2}{c}{CG time}
& \multicolumn{2}{c}{Bound loss}
& \multicolumn{2}{c}{Recovery} \\
\cmidrule(lr){2-2}
\cmidrule(lr){3-4}
\cmidrule(lr){5-6}
\cmidrule(l){7-8}
Variant
& \#Init. L-PDDOIs
& \(T_{\mathrm{CG}}\)/s
& \(T_{\mathrm{LP}}\)/s
& Mean \(\Delta_{\mathrm{bd}}\)/\%
& Max. \(\Delta_{\mathrm{bd}}\)/\%
& \#Rec.
& \#Act. APC \\
\midrule
Default         & --      & 119.9 & 36.3 & --   & --   & --   & -- \\
\midrule
Raw Prediction  & 119,794 & 10.1  & 2.1  & 9.5 & 66.2 & --   & 365 \\
Repair Only     & 117,457 & 18.0  & 4.7  & 1.3 & 9.7  & --   & 270 \\
Full            & 2,045   & 12.3  & 1.6  & 1.3 & 9.7  & --   & 276 \\
\midrule
Raw+Rec         & 119,794 & 110.6 & 37.0 & 0.0 & 0.0  & 56.3 & 0 \\
Full+Rec        & 2,045   & 54.2  & 9.0  & 0.0 & 0.0  & 8.8 & 0 \\
\bottomrule
\end{tabular}
\vspace{-5pt}
\end{table}

\subsubsection{Transfer to the CVRPLIB X-Series Benchmark}\label{subsec_cvrp_transfer}

We evaluate the model trained on XML500-Learn directly on X100 without retraining. Table~\ref{tab_cvrp_generalization} reports the aggregate result and partitions X100 by customer count.

\begin{table}[htbp]
\centering
\caption{L-PDDOI transfer performance on CVRPLIB X-series instances.}
\label{tab_cvrp_generalization}
\vspace{-3pt}
\footnotesize
\setlength{\tabcolsep}{3pt}
\renewcommand{\arraystretch}{1.08}
\begin{tabular}{@{}lrrrrrrrrr@{}}
\toprule
& \multicolumn{2}{c}{Instance set}
& \multicolumn{1}{c}{Deployment}
& \multicolumn{2}{c}{CG time}
& \multicolumn{4}{c}{Performance} \\
\cmidrule(lr){2-3}
\cmidrule(lr){4-4}
\cmidrule(lr){5-6}
\cmidrule(l){7-10}
Instance set
& \#Inst.
& Avg. \(n\)
& \makecell{\#Init.\\L-PDDOIs}
& \makecell{Default\\\(T_{\mathrm{CG}}\)/s}
& \makecell{L-PDDOI\\\(T_{\mathrm{CG}}\)/s}
& Red./\%
& \(\Delta_{\mathrm{bd}}/\%\)
& BKS gap/\%
& \makecell{\#BKS gap\\\(<5\%\)} \\
\midrule
XML500-Test & 200 & 500 & 2,045 & 119.9 & 12.3 & 89.7 & 1.3 & --  & -- \\
X100        & 100 & 412 & 1,312 & 40.7  & 5.9  & 85.6 & 1.1 & 2.4 & 97 \\
\quad \(n<300\)        & 43 & 199 &   623 &  6.8  & 1.6  & 76.2 & 1.1 & 2.7 & 42 \\
\quad \(300\le n<600\) & 34 & 426 & 1,646 & 59.3  & 7.4  & 87.4 & 1.2 & 2.3 & 32 \\
\quad \(n\ge600\)      & 23 & 791 & 3,781 & 656.3 & 45.6 & 93.1 & 1.1 & 1.9 & 23 \\
\bottomrule
\end{tabular}
\vspace{-5pt}
\end{table}

L-PDDOI reduces the geometric mean root CG time on X100 from $40.7$ to $5.9$ seconds, an $85.6\%$ reduction, with a mean bound loss of $1.1\%$ and a mean BKS gap of $2.4\%$. The BKS gap is below $5.0\%$ for $97$ of the $100$ instances and for all $23$ instances with $n\ge 600$. Across the three size groups, the time reduction increases from $76.2\%$ for $n<300$ to $93.1\%$ for $n\ge 600$, while the mean bound loss remains between $1.1\%$ and $1.2\%$ and the mean BKS gap decreases from $2.7\%$ to $1.9\%$. The transfer results indicate that the learned L-PDDOIs remain effective across the customer ranges represented in the X-series benchmark.

\subsection{Root Node Stabilization for the VRPTW}\label{subsec_vrptw_case}

We next examine L-PDDOIs on the VRPTW, where time windows couple routing and scheduling decisions, expand the pricing resource space, and change which customers are difficult to cover. Pairwise inequalities that remain stable in a spatial setting may lose their stability when temporal constraints bind, making the VRPTW a more stringent test.

For the VRPTW, Auto-Stab uses the RouteOpt defaults, with \(\gamma=0.3\), delta scale \(1\), and initial extra scale \(10\). L-PDDOI-Rec uses one exact pricing stage (\(J=1\)) with \(K_{\mathrm{tail}}=0\) and \(\epsilon_{\mathrm{act}}=10^{-3}\).

The VRPTW pair predictor uses 59 features, consisting of the 32 CVRP features, 24 time window features, and three spatial regime indicators. Its AP and F1 score on S-GH200-Test are $0.9709$ and $0.9033$, respectively. S-GH200-Test is excluded from fitting and validation and serves as the solver evaluation set. Section~\ref{sec:ec:vrptw_features} of the EC appendix reports the features and protocol.

\subsubsection{Main Computational Results}\label{subsec_vrptw_main}

Table~\ref{tab_vrptw_main} compares the four primary root CG configurations on S-GH200-Test across runtime, workload, and bound quality.

\begin{table}[H]
\centering
\caption{Root column generation performance on S-GH200-Test.}
\label{tab_vrptw_main}
\vspace{-3pt}
\footnotesize
\setlength{\tabcolsep}{2.0pt}
\renewcommand{\arraystretch}{1.08}
\begin{tabular}{@{}lrrrrrrrrrrr@{}}
\toprule
& \multicolumn{3}{c}{CG time}
& \multicolumn{2}{c}{Overhead}
& \multicolumn{2}{c}{Performance}
& \multicolumn{1}{c}{Deployment}
& \multicolumn{2}{c}{CG work}
& \multicolumn{1}{c}{Recovery} \\
\cmidrule(lr){2-4}
\cmidrule(lr){5-6}
\cmidrule(lr){7-8}
\cmidrule(lr){9-9}
\cmidrule(lr){10-11}
\cmidrule(l){12-12}
Method
& \(T_{\mathrm{price}}\)/s
& \(T_{\mathrm{LP}}\)/s
& \(T_{\mathrm{CG}}\)/s
& Pred./s
& Post./s
& Red./\%
& \(\Delta_{\mathrm{bd}}\)/\%
& \makecell{\#Init.\\L-PDDOIs}
& \#Iter.
& \#Col.
& \#Rec. \\
\midrule
Default
& 24.1 & 64.0 & 95.2 & -- & -- & -- & -- & -- & 24,977 & 89,087 & -- \\
Auto-Stab
& 15.7 & 24.9 & 45.7 & -- & -- & 52.0 & -- & -- & 6,557 & 71,755 & -- \\
\addlinespace[2pt]
L-PDDOI
& 3.2 & 1.5 & 5.8 & 2.72 & 0.03 & 93.9 & 0.5 & 1,417 & 376 & 13,803 & -- \\
L-PDDOI-Rec
& 8.2 & 5.3 & 16.1 & 2.72 & 0.03 & 83.1 & 0.0 & 1,417 & 544 & 18,428 & 8.1 \\
\bottomrule
\end{tabular}
\vspace{-5pt}
\end{table}

\paragraph{Direct deployment and recovery.}
With $\tau_{\mathrm{pred}}=0.5$, direct L-PDDOI deployment reduces the geometric mean root CG time from $95.2$ to $5.8$ seconds, a $93.9\%$ reduction. Pricing time decreases from $24.1$ to $3.2$ seconds, LP reoptimization time from $64.0$ to $1.5$ seconds, and the number of pricing iterations from $24{,}977$ to $376$. The geometric mean time for feature construction and prediction is $2.72$ seconds, while postprocessing takes $0.03$ seconds, and the mean bound loss is $0.5\%$. L-PDDOI-Rec matches the recorded baseline CG bound and reduces root CG time to $16.1$ seconds, an $83.1\%$ reduction, with an average of $8.1$ recovery rounds. After accounting for prediction overhead and postprocessing, L-PDDOI and L-PDDOI-Rec reduce the geometric mean total runtime on S-GH200-Test by \(90.6\%\) and \(79.4\%\), respectively.

\subsubsection{Postprocessing and Bound Recovery}\label{subsec_vrptw_postprocessing}

Table~\ref{tab_vrptw_postprocessing} isolates graph repair, transitive reduction, and recovery at $\tau_{\mathrm{pred}}=0.5$. Raw Prediction reduces root CG time to $2.4$ seconds but has mean and maximum bound losses of $15.0\%$ and $40.9\%$. Repair Only reduces these losses to $0.5\%$ and $1.7\%$, respectively, while changing root CG time to $6.4$ seconds. Consistent with Theorem~\ref{theorem_same_feasible_region}, transitive reduction preserves the same bound loss profile while reducing the candidate set from $11{,}411$ to $1{,}417$ inequalities. The Full pipeline then lowers LP time from $2.0$ to $1.5$ seconds and root CG time from $6.4$ to $5.8$ seconds. Relative to Raw+Rec, the Full pipeline reduces the average number of recovery rounds from $34.4$ to $8.1$ and the geometric mean root CG time from $43.5$ to $16.1$ seconds. Together with the CVRP results, these findings show that graph repair, transitive reduction, and recovery serve complementary roles across routing problems with different feasibility structures.

\begin{table}[H]
\centering
\caption{Ablation of postprocessing and recovery on S-GH200-Test.}
\label{tab_vrptw_postprocessing}
\vspace{-3pt}
\footnotesize
\setlength{\tabcolsep}{4.0pt}
\renewcommand{\arraystretch}{1.08}
\begin{tabular}{@{}lrrrrrrr@{}}
\toprule
& \multicolumn{1}{c}{Deployment}
& \multicolumn{2}{c}{CG time}
& \multicolumn{2}{c}{Bound loss}
& \multicolumn{2}{c}{Recovery} \\
\cmidrule(lr){2-2}
\cmidrule(lr){3-4}
\cmidrule(lr){5-6}
\cmidrule(l){7-8}
Variant
& \#Init. L-PDDOIs
& \(T_{\mathrm{CG}}\)/s
& \(T_{\mathrm{LP}}\)/s
& Mean \(\Delta_{\mathrm{bd}}\)/\%
& Max. \(\Delta_{\mathrm{bd}}\)/\%
& \#Rec.
& \#Act. APC \\
\midrule
Default         & --     & 95.2 & 64.0 & --   & --   & --   & -- \\
\midrule
Raw Prediction  & 12,620 & 2.4  & 0.3  & 15.0 & 40.9 & --   & 112 \\
Repair Only     & 11,411 & 6.4  & 2.0  & 0.5  & 1.7  & --   & 39 \\
Full            & 1,417  & 5.8  & 1.5  & 0.5  & 1.7  & --   & 40 \\
\midrule
Raw+Rec         & 12,620 & 43.5 & 17.2 & 0.0  & 0.0  & 34.4 & 0 \\
Full+Rec        & 1,417  & 16.1 & 5.3  & 0.0  & 0.0  & 8.1  & 0 \\
\bottomrule
\end{tabular}
\vspace{-5pt}
\end{table}

\subsubsection{Transfer to the Official Gehring--Homberger Benchmarks}\label{subsec_vrptw_transfer}

We transfer the model trained on S-GH200-Learn without retraining to the official 200- and 400-customer Gehring--Homberger instances. Table~\ref{tab_vrptw_benchmark} reports the aggregate results.

\begin{table}[H]
\centering
\caption{L-PDDOI transfer performance on Gehring--Homberger instances.}
\label{tab_vrptw_benchmark}
\vspace{-3pt}
\footnotesize
\setlength{\tabcolsep}{3.0pt}
\renewcommand{\arraystretch}{1.08}
\begin{tabular}{@{}llrrrrrrrrr@{}}
\toprule
& & \multicolumn{2}{c}{Instance set}
& \multicolumn{1}{c}{Deployment}
& \multicolumn{2}{c}{CG time}
& \multicolumn{4}{c}{Performance} \\
\cmidrule(lr){3-4}
\cmidrule(lr){5-5}
\cmidrule(lr){6-7}
\cmidrule(l){8-11}
Instance set
& Method
& \#Inst.
& Avg. \(n\)
& \makecell{\#Init.\\L-PDDOIs}
& \makecell{Default\\\(T_{\mathrm{CG}}\)/s}
& \makecell{Method\\\(T_{\mathrm{CG}}\)/s}
& Red./\%
& \(\Delta_{\mathrm{bd}}\)/\%
& BKS gap/\%
& \makecell{\#BKS gap\\\(<5\%\)} \\
\midrule
GH200-Benchmark & L-PDDOI      & 30 & 200 & 1,440 & 111.0 & 6.0  & 94.6 & 0.4 & 2.5 & 28 \\
GH200-Benchmark & L-PDDOI-Rec & 30 & 200 & 1,440 & 111.0 & 14.8 & 86.7 & 0.0 & 2.0 & 29 \\
\midrule
GH400-Benchmark & L-PDDOI      & 30 & 400 & 5,428 & 1,176.7 & 27.0  & 97.7 & 6.0 & 8.9 & 9 \\
GH400-Benchmark & L-PDDOI-Rec & 30 & 400 & 5,428 & 1,176.7 & 272.6 & 76.8 & 0.0 & 3.2 & 23 \\
\bottomrule
\end{tabular}
\vspace{-5pt}
\end{table}

On GH200-Benchmark, direct deployment reduces root CG time by $94.6\%$ with a mean bound loss of $0.4\%$; recovery retains an $86.7\%$ reduction and lowers the mean BKS gap from $2.5\%$ to $2.0\%$. On GH400-Benchmark, direct deployment retains a $97.7\%$ reduction, but mean bound loss rises to $6.0\%$, and only $9$ of $30$ instances remain below a $5.0\%$ BKS gap. Recovery retains a $76.8\%$ reduction, lowers the gap from $8.9\%$ to $3.2\%$, and increases this count to $23$.

The transfer pattern reflects a structural distribution shift because the transition from GH200 to GH400 changes not only the number of customers but also the coordinate layout and the distributions of neighborhoods and routing compatibility. More specifically, within each Gehring--Homberger benchmark size and class, including R2, C2, and RC2, instances share a common customer coordinate layout and differ primarily in their time windows. Training on these instances therefore exposes the model to substantial temporal variation but limited diversity in spatial layouts and graph structures. When the instance size changes, the model encounters a new coordinate layout together with different distributions of distances, neighborhoods, and routing compatibility that are not well represented in the training data. Limited exposure to graph heterogeneity may explain why the mean bound loss increases from \(0.4\%\) on GH200-Benchmark to \(6.0\%\) on GH400-Benchmark. Improving direct transfer across benchmark sizes therefore requires greater spatial and temporal diversity in the training data.

\section{Conclusion and Research Implications}\label{sec_conclusion}

This paper develops L-PDDOIs as a structural learning approach to dual stabilization. The framework learns pairwise order relations that restrict the dual space, complementing methods that either predict a single dual vector or derive stabilizing inequalities from problem-specific arguments. CAPSC constructs jointly supported labels from sampled points on the optimal dual face, graph repair reconciles independently predicted relations, and transitive reduction provides a sparse representation of the resulting order cone.

The computational results show that dual-space contraction drives most of the acceleration, while the location of the contracted region remains critical. Although the Dual-Reg and Rand-pair ablations achieve substantial speedups, their considerably larger bound losses indicate that learning improves performance by directing contraction toward regions compatible with the optimal dual face. Graph repair strengthens bound control by limiting unintended relations, whereas transitive reduction preserves the induced order cone while reducing LP and recovery overhead. These findings support two deployment regimes. Direct deployment prioritizes speed at the cost of a potentially weaker lower bound, while recovery restores the baseline CG bound when exact node processing is required. The transfer results further indicate that distribution shifts in spatial structure, temporal constraints, and instance scale can make the predicted order system overly restrictive, thereby increasing the importance of recovery.

Future work should broaden the training distribution across instance sizes, spatial geometries, demand patterns, and time-window structures. Because CAPSC label generation is computationally expensive, promising directions include active instance selection, hard-pair mining, and adaptive sampling of the optimal dual face. L-PDDOIs should also be evaluated within complete BPC trees, where branching and cutting planes may introduce additional dual variables and require a broader inequality design. Finally, the framework could be extended to bounded dual differences and groupwise or hierarchical relations while preserving sparse deployment, tractable consistency control, and verifiable recovery of the baseline CG bound.

\bibliographystyle{informs2014}
\bibliography{reference}

\ECSwitch
\ECHead{Online Appendix}
\setlength{\abovedisplayskip}{5pt plus 2pt minus 2pt}
\setlength{\belowdisplayskip}{5pt plus 2pt minus 2pt}
\setlength{\abovedisplayshortskip}{3pt plus 1pt minus 1pt}
\setlength{\belowdisplayshortskip}{3pt plus 1pt minus 1pt}
\setlength{\textfloatsep}{9pt plus 2pt minus 2pt}
\setlength{\floatsep}{7pt plus 2pt minus 2pt}
\setlength{\intextsep}{7pt plus 2pt minus 2pt}
\phantomsection\label{sec:ec:start}
\renewcommand{\theHsection}{EC.\arabic{section}}
\renewcommand{\theHsubsection}{EC.\arabic{section}.\arabic{subsection}}
\renewcommand{\theHsubsubsection}{EC.\arabic{section}.\arabic{subsection}.\arabic{subsubsection}}
\renewcommand{\theHtable}{EC.\arabic{table}}
\renewcommand{\theHproposition}{EC.\arabic{proposition}}

\section{XML500 Instance Generation}\label{sec:ec:xml500_generation}
We generate XML500 instances with \(n=500\) using the official CVRPLIB XML generator associated with \citet{queiroga202110000} and distributed through CVRPLIB \citep{lima2014cvrplib}. Following the X-series design \citep{uchoa2017new}, the generator varies four categorical factors. These factors are depot placement \(\zeta_{\mathrm{dep}}\) (random, centered, or cornered), customer positions \(\zeta_{\mathrm{pos}}\) (random, clustered, or random clustered), demands \(\zeta_{\mathrm{dem}}\) (unit, bounded, broad-range, quadrant-structured, or few large with many small demands), and average route size \(\zeta_{\mathrm{route}}\) (from very short to ultra-long expected routes). Because the X-series instances do not report their generator settings, we infer a profile \(\boldsymbol\zeta_{\mathrm{gen}}=(\zeta_{\mathrm{dep}},\zeta_{\mathrm{pos}},\zeta_{\mathrm{dem}},\zeta_{\mathrm{route}})\) for each of the 100 instances. Depot placement is identified from the depot coordinates, route size from the customer count and the value of \(k\) encoded in the instance name, and demand family from its support, spatial structure, and distribution. To distinguish potentially overlapping position categories, we calibrate a nearest-centroid geometry classifier using synthetic instances with 100, 500, and 1{,}000 customers and multiple seeds, represented by normalized statistics of nearest-neighbor and five-nearest-neighbor distances. For each inferred profile, we generate seven independent instances with distinct seeds. Five constitute XML500-Learn, yielding 500 instances for pair label generation, feature screening, validation, and model training; the remaining two constitute XML500-Test, yielding 200 instances reserved for solver evaluation. The original X-series instances are retained for transfer evaluation. Complete implementation details, including the inferred profiles, replicate indices, and random seeds, will be provided in the public repository released after acceptance.

\section{Feature Construction and Computational Details}\label{sec:ec:features}

The CVRP pair model is an XGBoost classifier defined over ordered customer pairs. For each pair \((i,j)\), it predicts whether the inequality \(p_i\le p_j\) should enter the candidate set before graph repair and transitive reduction. After feature screening, the deployed representation contains 32 features.

\subsection{Pair Features for CVRP}\label{sec:ec:pair_features}

Throughout this section, \((i,j)\) denotes the ordered customer pair being scored, whereas \(u\) and \(v\) denote generic customers or dummy indices. For a CVRP instance with \(n\) customers, let \((x_u,y_u)\) and \(q_u\) denote the coordinates and demand of customer \(u\), respectively. Let \((x_0,y_0)\) denote the depot coordinates and \(Q>0\) the vehicle capacity. For \(u,v\in\{0,1,\ldots,n\}\), define \(d_{uv}=\sqrt{(x_u-x_v)^2+(y_u-y_v)^2}\). For each \(u\in[n]\), define \(d_u^{\mathrm{dep}}=d_{0u}\) and \(\phi_u=\operatorname{atan2}(y_u-y_0,x_u-x_0)\).

Using \(\epsilon_{\mathrm{num}}=10^{-12}\), define \(\bar q=\max\{n^{-1}\sum_{u=1}^n q_u,\epsilon_{\mathrm{num}}\}\), \(\bar d^{\mathrm{dep}}=\max\{n^{-1}\sum_{u=1}^n d_u^{\mathrm{dep}},\epsilon_{\mathrm{num}}\}\), and \(d_{\max}^{\mathrm{dep}}=\max\{\max_u d_u^{\mathrm{dep}},\epsilon_{\mathrm{num}}\}\). For \(n\ge 2\), let \(\bar d=\max\{2[n(n-1)]^{-1}\sum_{1\le u<v\le n}d_{uv},\epsilon_{\mathrm{num}}\}\), and set \(\bar d=1\) when \(n=1\).

Let \(\mathcal K_i\) contain the \(K_i=\min\{10,n-1\}\) customers nearest to \(i\), with ties broken by customer index. When \(K_i>0\), set \(\mu_i=K_i^{-1}\sum_{u\in\mathcal K_i}d_{iu}\), \(\nu_i^{\min}=\min_{u\in\mathcal K_i}d_{iu}/\bar d\), and \(\nu_i^{\mathrm{mean}}=\mu_i/\bar d\). The remaining descriptors are \(\nu_i^{\mathrm{std}}=\bar d^{-1}\sqrt{K_i^{-1}\sum_{u\in\mathcal K_i}(d_{iu}-\mu_i)^2}\) and \(\nu_i^{\mathrm{load}}=Q^{-1}\sum_{u\in\mathcal K_i}q_u\).

All four descriptors are set to zero when \(K_i=0\). Finally, define the pair saving \(\mathrm{sav}_{ij}=d_i^{\mathrm{dep}}+d_j^{\mathrm{dep}}-d_{ij}\), and let \(\Delta\phi_{ij}\in[-\pi,\pi)\) be the wrapped value of \(\phi_i-\phi_j\). In Table~\ref{tab_ec_full_cvrp_features}, source and destination refer to customers \(i\) and \(j\), respectively.

\begingroup
\scriptsize
\renewcommand{\arraystretch}{0.96}
\setlength{\tabcolsep}{2pt}
\setlength{\LTpre}{2pt}
\setlength{\LTpost}{2pt}
\setlength{\LTcapwidth}{0.94\linewidth}
\begin{longtable}{@{}>{\raggedleft\arraybackslash}p{0.035\linewidth}|
>{\raggedright\arraybackslash}p{0.45\linewidth}
>{\raggedright\arraybackslash}p{0.45\linewidth}@{}}
\caption{Definitions and deployed order of the 32 retained CVRP pair features.}
\label{tab_ec_full_cvrp_features}\\
\toprule
No. & Feature & Definition for ordered pair \((i,j)\) \\
\midrule
\endfirsthead
\toprule
No. & Feature & Definition for ordered pair \((i,j)\) \\
\midrule
\endhead
\midrule
\endfoot
\bottomrule
\endlastfoot
1  & Signed demand difference
   & \(q_i/\bar q-q_j/\bar q\). \\

2  & Difference in depot distance relative to maximum
   & \(d_i^{\mathrm{dep}}/d_{\max}^{\mathrm{dep}}-d_j^{\mathrm{dep}}/d_{\max}^{\mathrm{dep}}\). \\

3  & Absolute depot distance difference
   & \(\lvert d_i^{\mathrm{dep}}/\bar d^{\mathrm{dep}}-d_j^{\mathrm{dep}}/\bar d^{\mathrm{dep}}\rvert\). \\

4  & Signed depot distance difference
   & \(d_i^{\mathrm{dep}}/\bar d^{\mathrm{dep}}-d_j^{\mathrm{dep}}/\bar d^{\mathrm{dep}}\). \\

5  & Difference in demand relative to capacity
   & \(q_i/Q-q_j/Q\). \\

6  & Absolute demand difference
   & \(\lvert q_i/\bar q-q_j/\bar q\rvert\). \\

7  & Difference in normalized mean neighbor distance
   & \(\nu_i^{\mathrm{mean}}-\nu_j^{\mathrm{mean}}\). \\

8  & Absolute difference in depot distance relative to maximum
   & \(\lvert d_i^{\mathrm{dep}}/d_{\max}^{\mathrm{dep}}-d_j^{\mathrm{dep}}/d_{\max}^{\mathrm{dep}}\rvert\). \\

9  & Difference in normalized nearest-neighbor distance
   & \(\nu_i^{\min}-\nu_j^{\min}\). \\

10 & Destination demand
   & \(q_j/\bar q\). \\

11 & Source demand
   & \(q_i/\bar q\). \\

12 & Absolute difference in demand relative to capacity
   & \(\lvert q_i/Q-q_j/Q\rvert\). \\

13 & Source depot distance
   & \(d_i^{\mathrm{dep}}/\bar d^{\mathrm{dep}}\). \\

14 & Destination depot distance
   & \(d_j^{\mathrm{dep}}/\bar d^{\mathrm{dep}}\). \\

15 & Source mean neighbor distance
   & \(\nu_i^{\mathrm{mean}}\). \\

16 & Destination demand relative to capacity
   & \(q_j/Q\). \\

17 & Cosine of the angular difference
   & \(\cos(\Delta\phi_{ij})\). \\

18 & Source depot distance relative to maximum
   & \(d_i^{\mathrm{dep}}/d_{\max}^{\mathrm{dep}}\). \\

19 & Destination nearest-neighbor distance
   & \(\nu_j^{\min}\). \\

20 & Product of demands relative to mean demand
   & \((q_i/\bar q)(q_j/\bar q)\). \\

21 & Destination depot distance relative to maximum
   & \(d_j^{\mathrm{dep}}/d_{\max}^{\mathrm{dep}}\). \\

22 & Mean pairwise distance relative to maximum depot distance
   & \(\bar d/d_{\max}^{\mathrm{dep}}\). \\

23 & Demand dispersion relative to capacity
   & \(\operatorname{sd}_{u\in[n]}(q_u)/Q\). \\

24 & Absolute angular difference
   & \(\lvert\Delta\phi_{ij}\rvert/\pi\). \\

25 & Source demand relative to capacity
   & \(q_i/Q\). \\

26 & Absolute difference in normalized mean neighbor distance
   & \(\lvert\nu_i^{\mathrm{mean}}-\nu_j^{\mathrm{mean}}\rvert\). \\

27 & Scaled vehicle capacity
   & \(Q/1000\). \\

28 & Pair demand relative to capacity
   & \((q_i+q_j)/Q\). \\

29 & Destination neighborhood distance dispersion
   & \(\nu_j^{\mathrm{std}}\). \\

30 & Mean demand relative to capacity
   & \(\bar q/Q\). \\

31 & Product of demands relative to capacity
   & \((q_i/Q)(q_j/Q)\). \\

32 & Source local neighborhood load
   & \(\nu_i^{\mathrm{load}}\). \\
\end{longtable}
\endgroup

\subsection{Learning Model Selection and Deployment Training}
\label{appendix_cvrp_learning_training}

All CVRP learning procedures use a random split by instance of XML500-Learn into 350 training, 75 validation, and 75 reserved test instances, generated with random seed 42. For the learner comparison, each instance contributes a fixed sample of 10,000 ordered pairs, and all four learners use the same representation with 83 candidate features. XGBoost is selected for deployment using the validation split, whereas the reserved test split is used only for final reporting. Table~\ref{tab_ec_cvrp_learner_comparison} reports performance on the reserved test split. Accuracy, precision, recall, and F1 use \(\tau_{\mathrm{pred}}=0.5\), while the area under the receiver operating characteristic curve (AUC) and average precision (AP) aggregate performance across score thresholds.

\begin{table}[htbp]
\centering
\caption{CVRP learner comparison on the reserved test split using the common 83-feature representation.}
\label{tab_ec_cvrp_learner_comparison}
\footnotesize
\setlength{\tabcolsep}{5pt}
\renewcommand{\arraystretch}{1.00}
\begin{tabular}{@{}lrrrrrr@{}}
\toprule
Model & Accuracy & Precision & Recall & F1 & AUC & AP \\
\midrule
Linear classifier & 0.9003 & 0.9405 & 0.8423 & 0.8887 & 0.9717 & 0.9678 \\
Random forest     & 0.9436 & 0.9327 & 0.9492 & 0.9409 & 0.9885 & 0.9872 \\
Neural network    & 0.9490 & \textbf{0.9512} & 0.9403 & 0.9457 & 0.9912 & 0.9903 \\
XGBoost           & \textbf{0.9524} & 0.9462 & \textbf{0.9534} & \textbf{0.9498} & \textbf{0.9921} & \textbf{0.9913} \\
\bottomrule
\end{tabular}

\end{table}

Feature screening retains the 32 deployed features in Table~\ref{tab_ec_full_cvrp_features}. The screened XGBoost model used in all solver experiments attains an AP of $0.9909$ and an F1 score of $0.9484$ on the same reserved test split. The final model is fitted only on the 350 training instances and 3,500,000 rows; validation, reserved test, XML500-Test, and X100 instances are excluded from refitting. The fixed model is used in all XML500-Test and X100 solver experiments.

\subsection{Generation of Synthetic S-GH200 VRPTW Instances}\label{appendix_gh200_generation}

The S-GH200 collection resamples time windows from the official 200-customer Gehring--Homberger instances while preserving customer coordinates, demands, fleet size, vehicle capacity, depot horizon, and service times. We use the R2, C2, and RC2 classes, each comprising ten variants that share the same static data and differ only in their time windows. For class \(g\), let \(H_g\) denote the depot horizon. For template \(v\in\{1,\ldots,10\}\) and customer \(i\), define the ready time \(e_{vi}\), due time \(\ell_{vi}\), width \(w_{vi}=\ell_{vi}-e_{vi}\), and center \(m_{vi}=(e_{vi}+\ell_{vi})/2\). The restrictive template set is \(\mathcal J_i:=\{v:w_{vi}<\operatorname{round}(0.95H_g)\}\). If \(\mathcal J_i\ne\varnothing\), draw \(V_i\sim\operatorname{Unif}(\mathcal J_i)\) and set \(\tilde m_i=m_{V_i i}\); otherwise, set \(\tilde m_i=H_g/2\). Sampling template indices preserves the multiplicities of repeated centers without adding center jitter. For each \(v\in\mathcal J_i\), define \(\hat e_{vi}:=\max\{0,\min\{\operatorname{round}(\tilde m_i-w_{vi}/2),H_g-w_{vi}\}\}\), and generate
\[
(\tilde e_{vi},\tilde\ell_{vi})=
\begin{cases}
(\hat e_{vi},\hat e_{vi}+w_{vi}), & v\in\mathcal J_i,\\
(0,H_g), & v\notin\mathcal J_i.
\end{cases}.
\]
Restrictive windows retain their original widths, while nonrestrictive windows span the full depot horizon; service times are inherited from the class template. We generate ten independent batches per class for S-GH200-Learn and two per class for S-GH200-Test, with each batch containing one instance for each of the ten time window templates. The procedure yields 300 learning instances and 60 test instances. A manifest records the source template and random seed, while the original Gehring--Homberger instances are reserved for transfer evaluation.

\subsection{VRPTW Features and Learning Protocol}
\label{sec:ec:vrptw_features}

The VRPTW model extends the 32 CVRP features in Table~\ref{tab_ec_full_cvrp_features} with 24 time window descriptors and three spatial regime indicators, yielding 59 deployed features. We fit a full model using all candidate time window descriptors and retain the 24 with the largest mean split gains. For customer \(u\), let \(e_u\), \(\ell_u\), and \(\tau_u\) denote ready time, due time, and service time, respectively; subscript 0 denotes the depot, and Euclidean distance \(d_{uv}\) is used as travel time. Define \(T_{\min}=\min\{e_0,\min_u e_u\}\), \(T_{\max}=\max\{\ell_0,\max_u\ell_u\}\), and \(H=\max\{T_{\max}-T_{\min},\epsilon_{\mathrm{num}}\}\). For each customer, let \(w_u=\max\{0,\ell_u-e_u\}\), \(L_u=\max\{e_u,e_0+d_u^{\mathrm{dep}}\}\), \(R_u=\ell_0-\tau_u-d_u^{\mathrm{dep}}\), \(\sigma_u=[\min\{\ell_u,R_u\}-L_u]/H\), and \(\sigma_u^+=\max\{0,\sigma_u\}\).

For distinct customers \(u\) and \(v\), let \(s_{uv}=\min\{\ell_u,\ell_v-\tau_u-d_{uv},\ell_0-\tau_u-d_{uv}-\tau_v-d_v^{\mathrm{dep}}\}-L_u\). Superscripts \(T\) and \(TC\) denote time compatibility and joint time and capacity compatibility, respectively. With \(\varepsilon_t=10^{-9}\) and \(n_{\mathrm{oth}}=\max\{n-1,1\}\), define \(F^T_{uv}=\mathbf 1\{s_{uv}\ge-\varepsilon_t,\ e_v\le R_v+\varepsilon_t\}\), \(F^{TC}_{uv}=F^T_{uv}\mathbf 1\{q_u+q_v\le Q\}\), \(D_u^{TC,+}=n_{\mathrm{oth}}^{-1}\sum_{v\ne u}F^{TC}_{uv}\), and \(D_u^{TC,-}=n_{\mathrm{oth}}^{-1}\sum_{v\ne u}F^{TC}_{vu}\).

Let \(\mathcal F_u^{TC}=\{v\ne u:F^{TC}_{uv}=1\}\). The pair route cost is
\[
c_u^{\mathrm{pair}}=
\begin{cases}
\displaystyle \min_{v\in\mathcal F_u^{TC}}\frac{d_u^{\mathrm{dep}}+d_{uv}+d_v^{\mathrm{dep}}}{\bar d}, & \mathcal F_u^{TC}\ne\varnothing,\\[2pt]
\displaystyle \frac{2d_u^{\mathrm{dep}}}{\bar d}, & \mathcal F_u^{TC}=\varnothing,
\end{cases}.
\]
The maximum saving proxy is \(\mathrm{sav}_u^{\max}=\max\{0,\max_{v\in\mathcal F_u^{TC}}\mathrm{sav}_{uv}/\bar d\}\), where the inner maximum is zero when \(\mathcal F_u^{TC}=\varnothing\). With \(o_{uv}=\max\{0,\min(\ell_u,\ell_v)-\max(e_u,e_v)\}\), define
\[
\begin{aligned}
P_u^{\mathrm{ov}}&=\frac{n_{\mathrm{oth}}^{-1}\sum_{v\ne u}\mathbf 1\{o_{uv}>0\}q_v/Q}{\max\{\sigma_u^+,10^{-6}\}},\\
\Psi_u&=\frac{q_u}{Q}+\frac{\tau_u}{\max\{w_u,\epsilon_{\mathrm{num}}\}}+c_u^{\mathrm{pair}}+P_u^{\mathrm{ov}}-\sigma_u^+
-\frac{D_u^{TC,+}+D_u^{TC,-}+\mathrm{sav}_u^{\max}}{2}.
\end{aligned}
\]

For the remaining selected descriptors, let \(\mathfrak C_u=\{\!\{d_u^{\mathrm{dep}}+d_{uv}+d_v^{\mathrm{dep}}:v\in\mathcal F_u^{TC}\}\!\}\), and denote its sorted elements by \(c_{u,(1)}\le\cdots\le c_{u,(|\mathfrak C_u|)}\). Then
\[
 A_u^{(k)}=
 \begin{cases}
 \displaystyle\frac{1}{\bar d\min\{k,|\mathfrak C_u|\}}
 \sum_{a=1}^{\min\{k,|\mathfrak C_u|\}}c_{u,(a)},&\mathfrak C_u\ne\varnothing,\\[3pt]
 \displaystyle\frac{2d_u^{\mathrm{dep}}}{\bar d},&\mathfrak C_u=\varnothing.
 \end{cases}
\]
Define the solo route cost \(c_u^{\mathrm{solo}}=2d_u^{\mathrm{dep}}/\bar d\), normalized overlap \(O_u=n_{\mathrm{oth}}^{-1}\sum_{v\ne u}o_{uv}/H\), and mutual time compatibility degree \(D_u^{T,\leftrightarrow}=n_{\mathrm{oth}}^{-1}\sum_{v\ne u}F^T_{uv}F^T_{vu}\). The residual capacity proxy is
\[
 \chi_u=
 \begin{cases}
 \displaystyle\min_{v:F^T_{uv}=1}\frac{\max\{0,Q-q_u-q_v\}}{Q},
 &\sum_{v\ne u}F^T_{uv}>0,\\[3pt]
 0,&\text{otherwise}.
 \end{cases}
\]

For the spatial grid, let \(x_{\min}=\min_u x_u\), \(x_{\max}=\max_u x_u\), and \(\Delta_x=\max\{x_{\max}-x_{\min},\epsilon_{\mathrm{num}}\}\), with \(y_{\min}\), \(y_{\max}\), and \(\Delta_y\) defined analogously. The clamped indices are \(g_x(u)=\min\{9,\max\{0,\lfloor 10(x_u-x_{\min})/\Delta_x\rfloor\}\}\), with \(g_y(u)\) defined analogously. If \(N_{\mathrm{occ}}\) denotes the number of occupied cells \((g_x(u),g_y(u))\), then \(\rho_{10}=N_{\mathrm{occ}}/100\). Table~\ref{tab_ec_full_vrptw_features} lists the 27 additional features in deployed order, where source and destination refer to customers \(i\) and \(j\), respectively.

\begingroup
\scriptsize
\renewcommand{\arraystretch}{0.96}
\setlength{\tabcolsep}{2pt}
\setlength{\LTpre}{2pt}
\setlength{\LTpost}{2pt}
\setlength{\LTcapwidth}{0.94\linewidth}
\begin{longtable}{@{}>{\raggedleft\arraybackslash}p{0.035\linewidth}|>{\raggedright\arraybackslash}p{0.45\linewidth}>{\raggedright\arraybackslash}p{0.45\linewidth}@{}}
\caption{Definitions and deployed order of the 27 additional VRPTW features.}
\label{tab_ec_full_vrptw_features}\\
\toprule
No. & Feature & Definition for ordered pair \((i,j)\) \\
\midrule
\endfirsthead
\toprule
No. & Feature & Definition for ordered pair \((i,j)\) \\
\midrule
\endhead
\midrule
\endfoot
\bottomrule
\endlastfoot
\multicolumn{3}{@{}l}{\textit{Time window features}}\\
\midrule
33 & Signed dual hardness difference & \(\Psi_i-\Psi_j\). \\
34 & Source service time relative to window width & \(\tau_i/\max\{w_i,\epsilon_{\mathrm{num}}\}\). \\
35 & Destination window width relative to horizon & \(w_j/H\). \\
36 & Destination best pair saving & \(\mathrm{sav}_j^{\max}\). \\
37 & Source best pair route cost & \(c_i^{\mathrm{pair}}\). \\
38 & Destination best pair route cost & \(c_j^{\mathrm{pair}}\). \\
39 & Source dual hardness & \(\Psi_i\). \\
40 & Source solo-route cost & \(c_i^{\mathrm{solo}}\). \\
41 & Destination time-window center & \(((e_j+\ell_j)/2-T_{\min})/H\). \\
42 & Destination mean top-five pair-route cost & \(A_j^{(5)}\). \\
43 & Signed time-window overlap sum & \(O_i-O_j\). \\
44 & Destination minimum compatible capacity residual & \(\chi_j\). \\
45 & Destination solo-route cost & \(c_j^{\mathrm{solo}}\). \\
46 & Signed mutual time-compatibility degree & \(D_i^{T,\leftrightarrow}-D_j^{T,\leftrightarrow}\). \\
47 & Destination ready time relative to horizon & \((e_j-T_{\min})/H\). \\
48 & Destination service time relative to window width & \(\tau_j/\max\{w_j,\epsilon_{\mathrm{num}}\}\). \\
49 & Destination due time relative to horizon & \((\ell_j-T_{\min})/H\). \\
50 & Signed window-width difference & \((w_i-w_j)/H\). \\
51 & Destination mean top-ten pair-route cost & \(A_j^{(10)}\). \\
52 & Source best pair saving & \(\mathrm{sav}_i^{\max}\). \\
53 & Source time-window center & \(((e_i+\ell_i)/2-T_{\min})/H\). \\
54 & Destination standalone slack & \(\sigma_j\). \\
55 & Source ready time relative to horizon & \((e_i-T_{\min})/H\). \\
56 & Source mean top-ten pair-route cost & \(A_i^{(10)}\). \\
\midrule
\multicolumn{3}{@{}l}{\textit{Spatial regime indicators}}\\
\midrule
57 & C2 spatial regime & \(\mathbf 1\{\rho_{10}\le 0.75\}\). \\
58 & R2 spatial regime & \(\mathbf 1\{\rho_{10}\ge 0.825\}\). \\
59 & RC2 spatial regime & \(\mathbf 1\{0.75<\rho_{10}<0.825\}\). \\
\end{longtable}
\endgroup

The protocol splits instances within each class. S-GH200-Learn instances 001--080 form the 240-instance training set, while instances 081--100 form the 60-instance validation set. S-GH200-Test instances 001--020 form the 60-instance test set. Each training instance contributes 10,000 ordered pairs, yielding 2,400,000 training rows. S-GH200-Test is excluded from fitting and validation and is used for prediction reporting and solver evaluation. All four candidate learners use the same split, sampled pairs, and 59 features in Tables~\ref{tab_ec_full_cvrp_features} and~\ref{tab_ec_full_vrptw_features}. Model selection maximizes validation AUC and uses AP to break ties, which selects XGBoost for deployment. Table~\ref{tab_vrptw_prediction_quality} reports performance on S-GH200-Test. Accuracy, precision, recall, and F1 use \(\tau_{\mathrm{pred}}=0.5\), while AUC and AP aggregate performance across score thresholds.

\begin{table}[htbp]
\centering
\caption{VRPTW learner comparison with 59 features.}
\label{tab_vrptw_prediction_quality}
\footnotesize
\setlength{\tabcolsep}{5pt}
\renewcommand{\arraystretch}{1.00}
\begin{tabular}{@{}lrrrrrr@{}}
\toprule
Model & Accuracy & Precision & Recall & F1 & AUC & AP \\
\midrule
Logistic regression & 0.7203 & 0.5406 & 0.8320 & 0.6554 & 0.8428 & 0.7380 \\
Random forest       & 0.8525 & 0.7297 & 0.8553 & 0.7876 & 0.9300 & 0.8687 \\
Neural network      & 0.8266 & 0.8069 & 0.6015 & 0.6892 & 0.8908 & 0.8131 \\
XGBoost             & \textbf{0.9365} & \textbf{0.8803} & \textbf{0.9275} & \textbf{0.9033} & \textbf{0.9841} & \textbf{0.9709} \\
\bottomrule
\end{tabular}
\end{table}

\section{Proofs of Statements in Section~\ref{sec_pair_layer}}\label{sec:ec:pair_proofs}

\subsection{Proof of Proposition~\ref{prop:capsc_properties}}\label{sec:ec:proof_capsc_properties}

\textit{Proof.} Let \((\kappa^\star,\eta^\star)\) be an optimal CAPSC solution, let \(\mathcal T^\star:=\{k\in[K]:\eta_k^\star=1\}\), and let \(E^{\mathrm{CAPSC}}:=\{(i,j)\in\mathcal I:\kappa_{ij}^\star=1\}\). Since \(\lceil\alpha K\rceil\ge1\), the retention constraint implies \(\mathcal T^\star\ne\varnothing\).

Consider any selected arc \((i,j)\in E^{\mathrm{CAPSC}}\). Since \(\kappa_{ij}^\star=1\), Constraint~\eqref{eq:CAPSC_support} gives \(\sum_{k\in\mathcal V_{ij}}\eta_k^\star\le 0\). No retained sample can therefore belong to \(\mathcal V_{ij}\), and every \(k\in\mathcal T^\star\) satisfies \(p_i^{(k)}+\varepsilon\le p_j^{(k)}\).

Since \(\mathcal T^\star\ne\varnothing\), a retained sample lies in \(\mathcal D^\star\) and satisfies every inequality in \(E^{\mathrm{CAPSC}}\). Hence \(E^{\mathrm{CAPSC}}\) is jointly deep dual-optimal.

We first prove acyclicity. Suppose, for contradiction, that \(G^{\mathrm{CAPSC}}\) contains a directed cycle \(v_0\to v_1\to\cdots\to v_{L-1}\to v_0\), where \(L\ge2\). Fix any \(k\in\mathcal T^\star\). Applying the margin support inequality to every arc in the cycle and summing gives
\[
\sum_{a=0}^{L-1}p_{v_a}^{(k)}+L\varepsilon
\le
\sum_{a=0}^{L-1}p_{v_a}^{(k)}.
\]
The resulting inequality \(L\varepsilon\le0\) contradicts \(L\ge2\) and \(\varepsilon>0\), which proves that \(G^{\mathrm{CAPSC}}\) is acyclic.

We next prove inference closure. Suppose that \(G^{\mathrm{CAPSC}}\) contains a directed path \(i=v_0\to v_1\to\cdots\to v_L=j\), where \(L\ge1\). The claim is immediate for \(L=1\). For \(L\ge2\), summing the margin inequalities along the path gives \(p_i^{(k)}+L\varepsilon\le p_j^{(k)}\) for every \(k\in\mathcal T^\star\), and thus \(p_i^{(k)}+\varepsilon\le p_j^{(k)}\). Hence no retained sample belongs to \(\mathcal V_{ij}\), so setting \(\kappa_{ij}=1\) preserves Constraint~\eqref{eq:CAPSC_support}.

We also have \(\kappa_{ji}^\star=0\); otherwise, the arc \(j\to i\) and the path from \(i\) to \(j\) would form a directed cycle. Adding \((i,j)\) therefore preserves Constraint~\eqref{eq:CAPSC_antisymmetry}. If \(\kappa_{ij}^\star=0\), setting it to \(1\) preserves feasibility and increases the objective by one, contradicting optimality. Thus \(\kappa_{ij}^\star=1\), and the arbitrary path proves inference closure. \hfill$\square$

\subsection{Proof of Proposition~\ref{prop_pf_feasibility}}\label{sec:ec:proof_pf_feasibility}

\textit{Proof.} Let \(E^{\mathrm{rep}}:=\{(i,j)\in E^0:\delta_{ij}=1\}\), and let \(G^{\mathrm{rep}}=G(E^{\mathrm{rep}})\) be the directed graph induced by a feasible repair solution \(\delta\).

We first prove inference closure. Suppose \((i,j)\in E^{\mathrm{rep}}\) and \((j,k)\in E^{\mathrm{rep}}\). Then \(\delta_{ij}=\delta_{jk}=1\), \(j\in N_{E^0}^+(i)\), and \(k\in N_{E^0}^+(j)\). If \(k\notin N_{E^0}^+(i)\), Constraint~\eqref{eq_postprocessing_reduced_cons2} gives \(\delta_{ij}+\delta_{jk}\le 1\), a contradiction. Hence \(k\in N_{E^0}^+(i)\cap N_{E^0}^+(j)\), and Constraint~\eqref{eq_postprocessing_reduced_cons1} gives \(\delta_{ij}+\delta_{jk}-\delta_{ik}\le 1\). Substituting \(\delta_{ij}=\delta_{jk}=1\) yields \(\delta_{ik}=1\). Thus every retained path of length two contains its shortcut.

Consider a shortest directed path between two distinct reachable nodes. If the path has more than one arc, the length two argument applied to its first two arcs produces a shorter path with the same endpoints. Hence every shortest path has one arc, and every reachable pair belongs to \(E^{\mathrm{rep}}\). Thus, \(G^{\mathrm{rep}}\) has inference closure.

We next prove acyclicity. Suppose, for contradiction, that \(G^{\mathrm{rep}}\) contains a directed cycle \(i_1 \to i_2 \to \cdots \to i_\ell \to i_1\), where \(\ell\ge 2\). Inference closure implies \((i_1,i_\ell)\in E^{\mathrm{rep}}\), while the cycle contains \((i_\ell,i_1)\in E^{\mathrm{rep}}\). Thus \(i_\ell\in N_{E^0}^+(i_1)\) and \(i_1\in N_{E^0}^+(i_\ell)\). Because \(E^0\subseteq\mathcal I\), we also have \(i_1\notin N_{E^0}^+(i_1)\). Applying Constraint~\eqref{eq_postprocessing_reduced_cons2} to \((i_1,i_\ell,i_1)\) gives \(\delta_{i_1 i_\ell}+\delta_{i_\ell i_1}\le 1\), which forbids selecting both arcs. The contradiction proves that \(G^{\mathrm{rep}}\) is acyclic.

Every feasible repair solution therefore induces a DAG with inference closure. \hfill$\square$

\subsection{SCC Preprocessing and Feasible Extension}\label{sec:ec:sccPreprocessing}

Algorithm~\ref{alg:ecSccRepair} gives the preprocessing procedure used before exact repair. Let \(\mathcal B=(B_1,\ldots,B_{N_{\mathrm{blk}}})\) be an ordered partition of \([m]\), obtained by topologically sorting the SCC condensation of \(G^0\). An SCC larger than \(K_{\mathrm{scc}}\) is split into ordered chunks of size at most \(K_{\mathrm{scc}}\). Within such an SCC \(C\), nodes are ranked by \(\omega(v):=d_C^+(v)-d_C^-(v)\), with ties broken by larger \(d_C^+(v)\), smaller \(d_C^-(v)\), and then customer index. The first \(K_{\mathrm{scc}}\) nodes form the next chunk, the scores are recomputed on the remaining nodes, and the procedure continues until \(C\) is exhausted. For chunks from the same SCC, arcs from a later to an earlier chunk are removed. Write \(v\prec_{\mathcal B}u\) when the block containing \(v\) precedes the block containing \(u\).

We use \(K_{\mathrm{scc}}=50\). Heuristic arcs are fixed to one; all other retained within-block and forward cross-block arcs enter the residual PF, except variables fixed to zero when a fixed-one arc would imply a closure arc absent from \(E^0\). PF rows are generated explicitly before optimization. Gurobi uses one thread, zero relative mixed-integer programming gap, and no time limit, and must return an optimal solution.

{
\DontPrintSemicolon
\begin{algorithm}[H]
\small
\caption{SCC preprocessing for feasible partial repair}\label{alg:ecSccRepair}
\KwData{Raw graph \(G^0=([m],E^0)\), neighborhoods \(N_{E^0}^+(i)\), and ordered blocks \(\mathcal B\).}
\KwResult{A partial repaired arc set \(E^{\mathrm H}\).}
Set \(E^{\mathrm H}\leftarrow\varnothing\) and \(\operatorname{Desc}(i)\leftarrow\{i\}\) for every \(i\in[m]\)\;
Choose any node order \((i_1,\ldots,i_m)\) consistent with \(\mathcal B\)\;
\For{\(k=m,m-1,\ldots,1\)}{
    Set \(u\leftarrow i_k\)\;
    \ForEach{\(v\prec_{\mathcal B}u\) satisfying \(\operatorname{Desc}(u)\subseteq N_{E^0}^+(v)\)}{
        Set \(E^{\mathrm H}\leftarrow E^{\mathrm H}\cup\{(v,u)\}\)\;
        Set \(\operatorname{Desc}(v)\leftarrow\operatorname{Desc}(v)\cup\operatorname{Desc}(u)\)\;
    }
}
\Return \(E^{\mathrm H}\)\;
\end{algorithm}
}

\begin{proposition}\label{prop:ecSccRepair}
Algorithm~\ref{alg:ecSccRepair} returns an arc set \(E^{\mathrm H}\subseteq E^0\) for which \(G(E^{\mathrm H})\) is a DAG with inference closure. The incidence vector of \(E^{\mathrm H}\) is feasible for \eqref{eq_postprocessing_reduced}. Consequently, fixing \(\delta_{ij}=1\) for every \((i,j)\in E^{\mathrm H}\) leaves the original PF formulation feasible.
\end{proposition}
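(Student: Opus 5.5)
The plan is to prove three facts in order: every accepted arc lies in \(E^0\); \(\operatorname{Desc}(i)\) equals \(\{i\}\cup R^+_{E^{\mathrm H}}(i)\) and every element of it other than \(i\) is an explicit out-neighbor of \(i\) in \(E^{\mathrm H}\); and \(G(E^{\mathrm H})\) is acyclic. The first fact is immediate. An arc \((v,u)\) is accepted only when \(\operatorname{Desc}(u)\subseteq N^+_{E^0}(v)\), and \(u\in\operatorname{Desc}(u)\) because it is initialized there and never removed. Hence \(u\in N^+_{E^0}(v)\), so \((v,u)\in E^0\).

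For acyclicity, every accepted arc \((v,u)\) satisfies \(v\prec_{\mathcal B}u\). The strict block order therefore acts as a potential that strictly increases along arcs, so no directed cycle can exist.

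The core step is an invariant, proved by induction over the outer loop. I would first note that all arcs leaving \(u\) are added at a step where \(u\) is the head's tail, that is, when some later-processed node \(w\) with \(u\prec_{\mathcal B}w\) is handled. Since the scan is in reverse order, every such \(w\) is processed before \(u\). Thus, by the time \(u\) is processed as a head, \(\operatorname{Desc}(u)\) is final.

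The invariant to maintain is: \(\operatorname{Desc}(x)\setminus\{x\}=N^+_{E^{\mathrm H}}(x)=R^+_{E^{\mathrm H}}(x)\) for every \(x\), where \(E^{\mathrm H}\) is the current arc set. Suppose \((v,u)\) is added. Then \(\operatorname{Desc}(v)\) gains \(\operatorname{Desc}(u)\). The acceptance test, together with the already-final \(\operatorname{Desc}(u)\), shows that \(v\) is an \(E^0\)-neighbor of all of them, but I need them to be \(E^{\mathrm H}\)-arcs.

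This is the main obstacle. In Algorithm~\ref{alg:ecSccRepair}, only \((v,u)\) is added, not \((v,w)\) for \(w\in\operatorname{Desc}(u)\). I would argue that each \(w\in\operatorname{Desc}(u)\setminus\{u\}\) is itself processed before \(u\), since \(u\prec w\). When \(w\) was processed, the test for the pair \((v,w)\) required \(\operatorname{Desc}(w)\subseteq N^+_{E^0}(v)\). That holds because \(\operatorname{Desc}(w)\subseteq\operatorname{Desc}(u)\subseteq N^+_{E^0}(v)\), provided \(\operatorname{Desc}(w)\) was already final at that moment, which it is. Hence \((v,w)\) was already accepted earlier, and closure holds.

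The delicate point is that the test for \((v,w)\) inspects \(\operatorname{Desc}(w)\), which is final at that time, so the implication goes the right way. I would formalize this as the statement that \(E^{\mathrm H}\) is exactly the set of pairs \(v\prec u\) passing the final-Desc test, and that this set is closed. Also, \(\operatorname{Desc}(v)\) only grows by unions of descendant sets, so reachability matches \(\operatorname{Desc}\).

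With \(G(E^{\mathrm H})\) shown to be a closed DAG inside \(E^0\), PF feasibility follows by checking the PF constraints. Take arcs \((i,j),(j,k)\in E^{\mathrm H}\). Closure gives \((i,k)\in E^{\mathrm H}\subseteq E^0\), so \(k\in N^+_{E^0}(i)\), and constraint~\eqref{eq_postprocessing_reduced_cons1} holds with \(\delta_{ik}=1\). Constraint~\eqref{eq_postprocessing_reduced_cons2} is never tight-violated, because its hypothesis \(k\notin N^+_{E^0}(i)\) cannot occur with both arcs selected. All other rows hold trivially since at most one of the arcs involved is selected, or \(\delta_{ik}=1\). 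Thus the incidence vector of \(E^{\mathrm H}\) is itself a feasible completion, so fixing these variables to one leaves PF feasible.
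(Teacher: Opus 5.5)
Your proposal is correct and follows essentially the paper's proof. It obtains \(E^{\mathrm H}\subseteq E^0\) from \(u\in\operatorname{Desc}(u)\), acyclicity from the strict block order, and closure from the fact that \(\operatorname{Desc}(w)\) is already final when \(w\) is scanned, so \(\operatorname{Desc}(w)\subseteq\operatorname{Desc}(u)\subseteq N^+_{E^0}(v)\) forces \((v,w)\) to have been accepted earlier; it then checks the PF rows one by one.

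The only difference is packaging: you maintain \(\operatorname{Desc}(x)\setminus\{x\}=N^+_{E^{\mathrm H}}(x)=R^+_{E^{\mathrm H}}(x)\) at every insertion, whereas the paper proves the length-two shortcut property and inducts on path length. Keeping your invariant valid for nodes other than \(v\) tacitly uses that \(v\) has no incoming arcs when \((v,u)\) is inserted, because arcs into \(v\) are added only when \(v\) itself is scanned, after \(u\); you should state this explicitly.
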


\textit{Proof.} We first establish an invariant of the reverse scan. Immediately before a node \(u\) is processed, \(\operatorname{Desc}(u)\) consists of \(u\) and all nodes reachable from \(u\) through arcs already selected by the algorithm. After \(u\) has been processed, \(\operatorname{Desc}(u)\) does not change.

The invariant follows by reverse induction over the node order consistent with \(\mathcal B\). Every selected arc leaving \(u\) points to a later block and is considered when its head is processed, before \(u\) itself. Whenever \((u,w)\) is selected, the update \(\operatorname{Desc}(u)\leftarrow\operatorname{Desc}(u)\cup\operatorname{Desc}(w)\) adds exactly the nodes reachable through \(w\). All possible heads in later blocks have already been scanned when \(u\) is processed, so no selected arc can subsequently change \(\operatorname{Desc}(u)\).

Whenever the algorithm selects \((v,u)\), the inclusion \(u\in\operatorname{Desc}(u)\) and the acceptance condition \(\operatorname{Desc}(u)\subseteq N_{E^0}^+(v)\) imply \((v,u)\in E^0\). Hence \(E^{\mathrm H}\subseteq E^0\). Every selected arc also moves from an earlier block to a later block, so \(G(E^{\mathrm H})\) is acyclic.

It remains to prove inference closure. Suppose that \((v,u),(u,w)\in E^{\mathrm H}\). The block order gives \(v\prec_{\mathcal B}u\prec_{\mathcal B}w\), so \(w\) is processed before \(u\). Selection of \((u,w)\) makes \(\operatorname{Desc}(u)\) contain \(\operatorname{Desc}(w)\). When \((v,u)\) is subsequently selected, the acceptance condition therefore gives \(\operatorname{Desc}(w)\subseteq\operatorname{Desc}(u)\subseteq N_{E^0}^+(v)\). The invariant implies that \(\operatorname{Desc}(w)\) already has its final value when \(w\) is processed. Since \(v\prec_{\mathcal B}w\), the inner loop considers \(v\) while processing \(w\), and the preceding inclusion causes the algorithm to select \((v,w)\). Thus every selected path of length two contains its shortcut. Induction on path length proves inference closure.

Let \(\delta^{\mathrm H}\) denote the incidence vector of \(E^{\mathrm H}\). Constraint~\eqref{eq_postprocessing_reduced_cons1} is satisfied because every selected path of length two is accompanied by its shortcut. A violation of Constraint~\eqref{eq_postprocessing_reduced_cons2} would require selected arcs whose shortcut is absent from \(E^0\); however, inference closure ensures that this shortcut belongs to \(E^{\mathrm H}\subseteq E^0\). Therefore, \(\delta^{\mathrm H}\) is feasible for PF. After fixing its selected variables to one, the same incidence vector remains feasible for the original formulation. \hfill$\square$

\subsection{Proof of Theorem~\ref{theorem_same_feasible_region}}\label{sec:ec:proof_same_feasible_region}

\textit{Proof.} Assume first that \(G(E_1)\) and \(G(E_2)\) have the same reachability relation. We show \(\mathcal O(E_2)\subseteq \mathcal O(E_1)\); the reverse inclusion is symmetric. Let \(\mathbf p\in \mathcal O(E_2)\) and \((i,j)\in E_1\). The arc \((i,j)\) is a path in \(G(E_1)\), so \(G(E_2)\) contains a path \(i=v_0 \to v_1 \to \cdots \to v_t=j\). Since \(\mathbf p\in \mathcal O(E_2)\), we have \(p_{v_0}\le p_{v_1}\le \cdots \le p_{v_t}\), and hence \(p_i\le p_j\). The choice of \((i,j)\) was arbitrary, which proves \(\mathcal O(E_2)\subseteq \mathcal O(E_1)\). The reverse inclusion follows symmetrically, giving \(\mathcal O(E_1)=\mathcal O(E_2)\).

For the converse, suppose that the reachability relations differ. Without loss of generality, let distinct nodes \(i,j\in[m]\) satisfy \(j\in R_{E_1}^+(i)\) and \(j\notin R_{E_2}^+(i)\). Adding \((j,i)\) to the DAG \(G(E_2)\) cannot create a cycle, since such a cycle would contain an existing path from \(i\) to \(j\). Let \(\prec\) be a topological order of the augmented DAG \(G(E_2)\cup\{(j,i)\}\), and define \(p_v:=\operatorname{pos}_{\prec}(v)\) for every \(v\in[m]\), where \(\operatorname{pos}_{\prec}(v)\) is the position of \(v\) in \(\prec\).

Every arc \((u,v)\in E_2\) respects \(\prec\), so \(p_u<p_v\) and \(\mathbf p\in \mathcal O(E_2)\). The added arc \((j,i)\) also respects \(\prec\), which gives \(p_j<p_i\). Thus \(\mathbf p\) violates \(p_i\le p_j\). Since \(j\in R_{E_1}^+(i)\), every point in \(\mathcal O(E_1)\) satisfies this inequality. Consequently, \(\mathbf p\notin \mathcal O(E_1)\), and \(\mathcal O(E_1)\ne \mathcal O(E_2)\), which proves the converse. \hfill$\square$

\subsection{Proof of Proposition~\ref{prop_bound_recovery_certificate}}\label{sec:ec:proof_bound_recovery_certificate}

\textit{Proof.} For each pair \(e=(i,j)\in\mathcal E\), let \(\mathbf b_e=\mathbf e_i-\mathbf e_j\). After CG convergence, the master augmented with pair variables for \(\mathcal E\) can be written as
\[
z(\mathcal E)=\min_{x,\xi}\left\{\sum_{r\in\Omega}c_rx_r:
\sum_{r\in\Omega}a_{ir}x_r+\sum_{e\in\mathcal E}b_{ie}\xi_e=1\ \forall i\in[m],\
x_r\ge0\ \forall r\in\Omega,\ \xi_e\ge0\ \forall e\in\mathcal E\right\}.
\]
Its dual is
\[
z(\mathcal E)=\max_{\mathbf p}\left\{\sum_{i\in[m]}p_i:
\sum_{i\in[m]}a_{ir}p_i\le c_r\ \forall r\in\Omega,\
p_i-p_j\le0\ \forall(i,j)\in\mathcal E,\ p_i\in\mathbb R\ \forall i\in[m]\right\}.
\]
By the finite optimality assumptions, strong LP duality applies, and the two displayed programs have the common value \(z(\mathcal E)\). If \(\mathcal E'\subseteq\mathcal E\), the dual feasible region for \(\mathcal E'\) contains that for \(\mathcal E\). Maximization over the larger region gives \(z(\mathcal E')\ge z(\mathcal E)\).

We next prove the equivalence. Suppose first that \(\mathcal E\) is jointly deep dual-optimal. Some \(\bar{\mathbf p}\in\mathcal D^\star\) then satisfies every inequality indexed by \(\mathcal E\), so it is feasible for the augmented dual and attains \(z^\star\). The augmented dual feasible region is contained in the original dual feasible region, which gives \(z(\mathcal E)\le z^\star\). Feasibility of \(\bar{\mathbf p}\) gives the reverse inequality, and therefore \(z(\mathcal E)=z^\star\).

Conversely, suppose that \(z(\mathcal E)=z^\star\), and let \(\mathbf p^{\mathcal E}\) be any optimal solution of the augmented dual. The vector \(\mathbf p^{\mathcal E}\) satisfies all original route constraints and has objective value \(z^\star\), so \(\mathbf p^{\mathcal E}\in\mathcal D^\star\). It also satisfies every pair inequality indexed by \(\mathcal E\). Hence \(\mathcal E\) is jointly deep dual-optimal, and every augmented dual optimum belongs to \(\mathcal D^\star\).

Still assuming \(z(\mathcal E)=z^\star\), let \(x^\star\) be an optimal solution of the original master. The solution \((x^\star,\mathbf 0)\) is feasible for the augmented master and has objective value \(z^\star=z(\mathcal E)\). It is therefore an augmented master optimum with \(\xi_e=0\) for every \(e\in\mathcal E\).

Finally, suppose that an optimal augmented master solution \((\bar x,\bar\xi)\) satisfies \(\bar\xi_e=0\) for every \(e\in\mathcal E\). The vector \(\bar x\) is feasible for the original master and has objective value \(z(\mathcal E)\), which gives \(z^\star\le z(\mathcal E)\). The augmented master contains every original master solution by setting \(\xi=0\), so \(z(\mathcal E)\le z^\star\). Hence \(z(\mathcal E)=z^\star\), and \(\mathcal E\) is a certified L-PDDOI set. \hfill$\square$

\subsection{Finite Termination of Exact Recovery}\label{sec:ec:proofRecoveryTermination}

\begin{corollary}\label{cor:recoveryTermination}
Let \(\mathcal E^{(0)}\) be finite, \(J\in\mathbb Z_{\ge1}\), and \(K_{\mathrm{tail}}\in\mathbb Z_{\ge0}\), and suppose that the assumptions of Proposition~\ref{prop_bound_recovery_certificate} hold for \(\mathcal E^{(0)}\). Assume that every pricing stage terminates, the final stage solves the current augmented master to full CG optimality, and Algorithm~\ref{alg:pair_recovery} uses exact LP solutions, exact activity detection, and no gap target stopping. The algorithm terminates after at most \(|\mathcal E^{(0)}|\) release rounds and returns a set \(\mathcal E^{(t)}\) satisfying \(z(\mathcal E^{(t)})=z^\star\).
\end{corollary}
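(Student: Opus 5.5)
The argument combines a progress measure on the candidate set with the certificate in Proposition~\ref{prop_bound_recovery_certificate}. First, note that the algorithm only advances the pricing stage counter \(s\) when the active set is empty, and resets nothing else. When \(s<J\), the test uses a heuristic-stage RMP solution; when \(s=J\), the final stage solves the augmented master to full CG optimality with exact LP solutions.

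The plan is to show that every pass through the loop either increments \(s\) (at most \(J-1\) times in total, since \(s\) is never decreased) or performs a release round. Each stage terminates by assumption, so each pass is finite. In a release round the active set \(\mathcal E_{\mathrm{act}}^{(t)}\) is nonempty, and \(\mathcal E_{\mathrm{rel}}^{(t)}\supseteq\mathcal E_{\mathrm{act}}^{(t)}\). Hence \(|\mathcal E^{(t+1)}|\le|\mathcal E^{(t)}|-1\), and the number of release rounds is at most \(|\mathcal E^{(0)}|\).

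One subtlety must be checked: once \(\mathcal E^{(t)}=\varnothing\), no pair variables exist, so the active set is empty at every stage. The loop then advances \(s\) to \(J\) and returns without further release rounds. I will also check that the assumptions of Proposition~\ref{prop_bound_recovery_certificate} persist for every \(\mathcal E^{(t)}\subseteq\mathcal E^{(0)}\). Feasibility follows because the original master solution extended by \(\xi=0\) is feasible. Boundedness follows from the monotonicity part of the proposition: \(z^\star\ge z(\mathcal E^{(t)})\ge z(\mathcal E^{(0)})>-\infty\), so an LP optimum is attained. Thus the algorithm cannot loop indefinitely, and it must exit through the return statement at \(s=J\) with an empty active set.

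It remains to show correctness at termination. At return, \(s=J\) and the final stage has produced an optimal solution of the augmented master for \(\mathcal E^{(t)}\) with \(\xi_e=0\) for every \(e\in\mathcal E^{(t)}\), using exact activity detection. This is condition (iii) of Proposition~\ref{prop_bound_recovery_certificate}. That proposition then yields (ii), namely \(z(\mathcal E^{(t)})=z^\star\), and the returned value \(\widehat z^{t,J}\) equals \(z(\mathcal E^{(t)})\) by exactness of the final stage.

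I expect the main obstacle to be the bookkeeping of the stage counter. Rounds at heuristic stages may release pairs without full CG convergence, and after a release the algorithm keeps the current \(s\) rather than resetting it to \(1\). I will handle this by arguing that the count of release rounds depends only on the strict decrease of \(|\mathcal E^{(t)}|\), and that correctness uses only the final exit condition at \(s=J\). Neither argument requires any claim about the quality of heuristic-stage duals.
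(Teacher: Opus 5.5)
Your proposal is correct and follows essentially the same route as the paper: the Proposition's hypotheses carry over to every subset of $\mathcal E^{(0)}$, each release round strictly shrinks the candidate set, and the exit at $s=J$ with an empty active set gives condition (iii) and hence $z(\mathcal E^{(t)})=z^\star$. Your explicit accounting of the stage counter (at most $J-1$ increments, and the loop can end only through the return at $s=J$) is somewhat more careful than the paper's ``repeated releases eventually produce $\varnothing$,'' and your boundedness step via the Proposition's monotonicity is equivalent to the paper's direct primal-containment argument.
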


\textit{Proof.} Every subset of \(\mathcal E^{(0)}\) yields a feasible augmented master because an original master solution remains feasible with all pair variables equal to zero. Its feasible region is contained in that of the augmentation by \(\mathcal E^{(0)}\), so its objective is also bounded below. Thus the assumptions of Proposition~\ref{prop_bound_recovery_certificate} hold at every recovery round.

Whenever \(\mathcal E_{\mathrm{act}}^{(t)}\ne\varnothing\), the released set \(\mathcal E_{\mathrm{rel}}^{(t)}\) is nonempty, regardless of which branch of the tailing off rule applies. Hence \(|\mathcal E^{(t+1)}|<|\mathcal E^{(t)}|\), and at most \(|\mathcal E^{(0)}|\) release rounds can occur.

If the algorithm terminates at the final pricing stage with \(\mathcal E_{\mathrm{act}}^{(t)}=\varnothing\), full CG optimality and exact activity detection give an optimal augmented master solution with \(\xi_e=0\) for every \(e\in\mathcal E^{(t)}\). Proposition~\ref{prop_bound_recovery_certificate} then yields \(z(\mathcal E^{(t)})=z^\star\). Otherwise, repeated releases eventually produce \(\mathcal E^{(t)}=\varnothing\). The augmented master then coincides with the original master, and the final stage returns \(z(\varnothing)=z^\star\). Thus the algorithm terminates with the baseline CG bound after at most \(|\mathcal E^{(0)}|\) releases. \hfill$\square$

\end{document}